\documentclass[11pt, a4paper]{article}

\usepackage[T1]{fontenc}
\usepackage[utf8]{inputenc}
\usepackage{xcolor} %used for font color
\usepackage{graphicx} % for pdf, bitmapped graphics files
\usepackage{times} % assumes new font selection scheme installed
\usepackage{amsmath} % assumes amsmath package installed
\usepackage{amssymb}  % assumes amsmath package installed
\usepackage{subcaption}

\usepackage{url}
\usepackage{multirow}
\usepackage{cite}
\usepackage{algorithm}

\usepackage{algorithmic}
\usepackage{microtype} % improves justification, reduces overfull hboxes
\newcommand{\qedsymbol}{\hfill$\blacksquare$}

\graphicspath{{./figures/}}

\newtheorem{theorem}{Theorem}[section]
\newtheorem{lemma}[theorem]{Lemma}

\newtheorem{proposition}[theorem]{Proposition}
\newtheorem{st_assumption}[theorem]{Standing Assumption}
\newtheorem{definition}[theorem]{Definition}

\newtheorem{remark}[theorem]{Remark}

\newenvironment{proof}{\emph{Proof.}}{\hfill\qedsymbol}

\newcommand{\rr}{\mathbb{R}}

\newcommand{\tx}{\tilde x}
\newcommand{\tl}{\tilde \lambda}
\newcommand{\tn}{\tilde \nu}
\newcommand{\xstar}{x^\star}
\newcommand{\lstar}{\lambda^\star}
\newcommand{\nstar}{\nu^\star}
\newcommand{\T}{\top}

\newcommand{\XX}{\mathcal{X}}
\newcommand{\II}{\mathcal{I}}
\newcommand{\NN}{\mathcal{N}}

\newcommand{\cvd}{\hfill\qedsymbol}

\begin{document}

\title{Solving Monotone Linear-Quadratic Generalized Nash Equilibrium Problems via Quadratic Programming}

\author{Alberto Bemporad and Tatiana Tatarenko 
{\renewcommand{\thefootnote}{}\thanks{A. Bemporad is with the IMT School for Advanced Studies, Piazza San Francesco 19, Lucca, Italy. Email: \texttt{\scriptsize alberto.bemporad@imtlucca.it}. T. Tatarenko is with the Department of Control Theory and Intelligent Systems, TU Darmstadt, Germany. E-mails: \texttt{\scriptsize tatiana.tatarenko@tu-darmstadt.de}.   
This work was funded by the European Union (ERC Advanced Research Grant COMPACT, No. 101141351). Views and opinions expressed are however those of the authors only and do not necessarily reflect those of the European Union or the European Research Council. Neither the European Union nor the granting authority can be held responsible for them. The work was also funded by the Deutsche Forschungsgemeinschaft (DFG, German Research Foundation). Project number: 528033031.
}}}

\maketitle
\thispagestyle{empty}
	
\begin{abstract}
We consider generalized Nash equilibrium problems among $N$ players with convex quadratic costs and shared affine constraints, assuming only that the game's pseudogradient is merely monotone. We show that computing a variational generalized Nash equilibrium (v-GNE) is equivalent to solving a single convex quadratic program (QP) derived from the players' joint Karush--Kuhn--Tucker conditions. Building on this,
we show that the regularization of such a QP yields an $\varepsilon$-approximated v-GNE
with suboptimality vanishing linearly in the regularization parameter. Next, we propose an accelerated proximal-point scheme and an accelerated projected-gradient method, both attaining an $\mathcal O(1/k^2)$-approximated v-GNE at the $k$-th iteration. We also demonstrate that an invertible Jacobian of the game allows for reduction to a lower-dimensional QP. Theoretical analysis and numerical experiments show the proposed methods substantially outperform the existing approaches to solve monotone linear-quadratic v-GNE problems.
\end{abstract}

% \noindent{\small \textbf{Keywords:} Generalized Nash equilibrium problems, game-theoretic optimization, quadratic programming, game-theoretic model predictive control, monotone games, variational inequalities.}

\section{Introduction}

Generalized Nash equilibrium problems (GNEPs) have become a fundamental framework for modeling decentralized decision-making and distributed control in engineering systems, where multiple strategic agents optimize individual objectives subject to shared coupling constraints. In recent years, they have attracted considerable attention in systems and control, with applications including power and energy systems, communication networks, autonomous vehicles, and resource allocation. In particular, game-theoretic model predictive control (MPC) has emerged as a powerful paradigm for coordinating dynamically coupled systems while preserving the autonomy of individual decision makers. In these formulations, each sampling instant requires the solution of a GNEP, making the overall performance of the controller critically dependent on the availability of efficient, reliable, and scalable equilibrium solvers~\cite{HB26,LSM22}.

Many game-theoretic formulations in control theory, including game-theoretic model predictive control, naturally lead to linear-quadratic generalized Nash equilibrium problems (LQ-GNEPs), where the agents' objectives are quadratic and the coupling constraints are affine~\cite{Bem26}. It is worth noting that solving LQ-GNEPs remains considerably more challenging than solving centralized quadratic optimization problems because each player optimizes an individual objective subject to a constraint set depending on the strategies of the other players. However, given the monotonicity of the game's pseudogradient, one can utilize the structure of the problem to develop efficient solution methods. Such methods typically formulate the problem as a variational inequality and employ forward-backward, proximal-point, Tikhonov regularization or extragradient-type algorithms~\cite{FP03,BYGP22}. While these approaches are well established and supported by strong theoretical guarantees, many of them require strong monotonicity of the game's pseudogradient to guarantee last iterate convergence with estimations of convergence rates~\cite{TatKam25}. Unfortunately, strong monotonicity is often difficult to verify and may fail in practical game-theoretic MPC applications due to strong interactions among agents or the presence of multiple equilibria. This motivates the development of computational methods that remain robust under the substantially weaker assumption of \emph{mere monotonicity}.

From a broader perspective, equilibrium problems and optimization problems represent two fundamentally different computational paradigms. Convex optimization enjoys a mature theoretical foundation together with highly efficient numerical algorithms and software capable of solving large-scale problems. Consequently, reformulating equilibrium problems as equivalent optimization problems has long been recognized as an attractive research direction. Such reformulations not only provide valuable structural insight into the underlying game but also enable the direct use of the extensive algorithmic machinery developed for convex optimization.
For Nash and generalized Nash equilibrium problems, however, exact optimization reformulations are known only in rather special situations. The classical example is that of potential games \cite{MondererShapley1996}, where the players' objectives admit a common potential function whose minimizers coincide with the equilibria. Unfortunately, the existence of a potential function requires restrictive integrability conditions, namely that the pseudogradient is the gradient of a scalar function. These conditions are rarely satisfied in engineering applications, and most monotone games arising in practice are therefore not potential games.

Outside the class of potential games, optimization reformulations become considerably more difficult. 
 One may reformulate the equilibrium conditions using nonlinear complementarity systems or construct optimization problems based on gap or merit functions \cite{Fukushima1992,Kanzow1996,NesterovScrimali}. While these techniques provide powerful theoretical  tools, they generally do not yield tractable convex optimization problems equivalent to the original game. In particular, merit functions, although convex for the case of monotone games, lack smoothness, and are defined as the supremum of a family of convex functions. Consequently, they are generally nonsmooth and cannot be evaluated without solving an additional optimization problem. As a result, the reformulated optimization problem may remain as difficult as the original game-theoretic one. To the best of our knowledge, no exact tractable convex optimization reformulation is currently available for a broad class of non-potential monotone v-GNEPs, including the important case of linear-quadratic games with shared affine constraints.

The objective of this paper is to bridge this gap for LQ-GNEPs with shared affine constraints. We start from the joint Karush--Kuhn--Tucker conditions characterizing v-GNE. Our first main result establishes that these conditions are precisely the optimality conditions of a \emph{single convex quadratic program}, whose Hessian is given by the symmetric part of the game's pseudogradient. Consequently, every v-GNE is obtained as a solution of this quadratic program, and conversely.  This establishes an exact optimization reformulation for an important subclass of  generally non-potential monotone games.

The proposed reformulation has important computational consequences. It immediately enables the use of convex quadratic optimization,  including highly optimized sparse QP solvers, regularized active-set and interior-point methods, proximal-point techniques, warm-start strategies, and accelerated first-order optimization algorithms. 
As for specific theoretical results, we show that Tikhonov regularization of the obtained quadratic program produces an $\varepsilon$-generalized Nash equilibrium whose approximation error decreases linearly with the regularization parameter.
Moreover, building upon the proposed reformulation, we focus on two accelerated solution algorithms. The first one combines the proximal-point framework with accelerated solution of strongly convex quadratic subproblems, while the second applies Nesterov acceleration to projected-gradient iterations. We show that the $k$th iterate of each method is an $\mathcal{O}(1/k^2)$-approximated v-GNE, improving upon the $\mathcal{O}(1/\sqrt{k})$ approximation attained by standard extragradient methods. Additionally, for games with an invertible pseudogradient, we derive an equivalent lower-dimensional quadratic program involving only the dual variables, leading to substantial computational savings. Finally, the proposed framework is illustrated on randomly generated LQ-GNEPs and on a game-theoretic model predictive control problem. Both theoretical analysis
and numerical experiments demonstrate that the proposed approaches substantially outperform the extragradient method and other existing approaches while preserving the robustness associated with monotone game formulations.

\subsection{Notation}
We denote by $\rr^n$ the $n$-dimensional Euclidean space and by $\rr^{m\times n}$ the set of real $m\times n$ matrices. Given a matrix $A\in\rr^{m\times n}$, $A^\T$ denotes its transpose and $A_{i,:}$ denotes its $i$-th row. The set of indices $\{1, \ldots, N\}$ is denoted by $[N]$.

\section{Problem Formulation and Preliminaries}
Consider an $N$-player non-cooperative generalized Nash equilibrium problem with 
decision vectors $x_i \in \rr^{n_i}$, $i \in [N]$, and denote
by $x = (x_1, \ldots, x_N) \in \rr^n$, $n = \sum_{i=1}^N n_i$,
the joint strategy profile. The vector $x_{-i}$ denotes all the components of $x$ except those in $x_i$. In particular, without abuse of notation, we may write $x=(x_i, x_{-i})$. Each player $i$ minimizes its local convex quadratic cost function $J_i:\rr^n\to \rr$ by solving the following convex optimization problem:
\begin{equation}
    \begin{aligned}
    \min_{x_i} &\, \left\{J_i(x) =\tfrac{1}{2} x^\T Q^{(i)} x + (c^{(i)})^\T x\right\} \\
    \textrm{s.t. }& Ax \leq b,\ Ex = e
    \end{aligned}
    \label{eq:QPi}
\end{equation}
where $Q^{(i)}\in \rr^{n \times n}$, $c^{(i)}\in \rr^n$, $A \in \rr^{m \times n}$, $b \in \rr^m$, $E \in \rr^{q \times n}$, $e \in \rr^q$ ($m,q\geq 0$).
Without loss of generality, we assume $Q^{(i)}=(Q^{(i)})^\T$. 
We consider the constraint set 
\begin{equation}
\XX = \{x \in \rr^n : Ax \leq b,\; Ex = e\}
\label{eq:XX}
\end{equation}
as \emph{shared}, i.e., it is identical and common to all players.
Possible local inequality constraints only involving $x_i$, such as lower and upper bounds, as well as local equality constraints, are assumed to be incorporated into $Ax \leq b$ and $Ex = e$, respectively.

By stacking the gradients $\nabla_{x_i}J_i(x_i,x_{-i})=Q^{(i)}_{i, :}x + c^{(i)}_{i}$ of each agent's cost with respect to its own decision variable, we obtain the \emph{pseudogradient} $\phi: \rr^n \to \rr^n$ of the game 
\begin{equation}
\phi(x) = F x + f
\label{eq:pseudo-gradient}
\end{equation}
where $F \in \rr^{n \times n}$, $f \in \rr^n$, and
\begin{equation}\label{eq:Gg_assembly}
  F_{i, :} = Q^{(i)}_{i,:}, \qquad
  f_{i}   = c^{(i)}_{i}, \qquad i = 1,\ldots,N.
\end{equation}
In this work, we consider the following standing assumption regarding the game's structure: 
\begin{st_assumption}\label{assum:mon}
The pseudogradient $\phi$ of the game is a monotone mapping, i.e. 
\[(\phi(x) - \phi(y))^\T(x-y)\ge 0, \quad \forall x,y\in \rr^n.\]
\end{st_assumption}
The assumption above implies, in particular, that each cost function $J_i$ is convex in $x_i$, i.e., $Q^{(i)}_{ii} \succeq 0$, $\forall i \in [N]$,
where $Q^{(i)}_{ii}$ denotes the $i$-th diagonal block of $Q^{(i)}$. Thus, taking into account affinity of the shared constraints, we conclude that the game under consideration is a convex GNEP. We proceed with the definition of solution concept to such game.

\begin{definition} A vector $\xstar \in \XX$ is a \emph{generalized Nash equilibrium} (GNE) if each vector $\xstar_i$ solves~\eqref{eq:QPi} for $x_{-i} = \xstar_{-i}$, for all $i \in [N]$.
\end{definition}
Next, we provide the definition of $\varepsilon$-approximated GNE as follows. 
\begin{definition}\label{def:appGNE} A vector $x^{\varepsilon} \in \XX$ is an $\varepsilon$-\emph{approximated generalized Nash equilibrium} ($\varepsilon$-appGNE), if for every player $i \in [N]$ and every unilateral feasible deviation $z_i$ satisfying
$(z_i,x_{-i}^{\varepsilon}) \in \XX$, the following inequality holds:
$J_i(x_i^{\varepsilon},x_{-i}^{\varepsilon}) \le J_i(z_i,x_{-i}^{\varepsilon}) + \varepsilon$.
\end{definition}
According to the definition above, at any $\varepsilon$-appGNE, no player can decrease its cost by more than \(\varepsilon\)
through any feasible unilateral deviation.

It is worth noting that computing even an $\varepsilon$-appGNE is generally a challenging task~\cite{FK10}. Nevertheless, when the game is convex, as assumed throughout this paper (see the remark following Standing Assumption~\ref{assum:mon}), one can focus on a distinguished subset of GNEs, known as \emph{variational generalized Nash equilibria} (v-GNEs)~\cite{FK10}. Such equilibria are characterized as solutions of the variational inequality $VI(\XX,\phi)$:
\begin{equation}
\mbox{Find } x^\star \in \XX \mbox{ such that } \phi(x^\star)^\T (y - x^\star) \geq 0,
\quad \forall\, y \in \XX.
\label{eq:VI}
\end{equation}

Several methods exist to solve the variational inequality~\eqref{eq:VI} associated with~\eqref{eq:QPi}.
Most of the methods require {\it strong} monotonicity of the pseudogradient $\phi$, i.e., $(\phi(x) - \phi(y))^\T(x-y)\ge \mu\|x-y\|^2$ for some $\mu>0$, to guarantee convergence to a v-GNE.
As we only assume {\it mere} monotonicity of $\phi$ in Assumption~\ref{assum:mon} ($\mu=0$), to have a baseline for comparison with our proposed approach to solve merely-monotone linear-quadratic generalized Nash equilibrium problems (LQ-GNEP) as defined in~\eqref{eq:QPi}--\eqref{eq:XX}, we review below the well-established extragradient method~\cite{Kor76}.

\subsection{Extragradient Method}
\label{sec:extragradient}
Starting from any $x^0 \in \XX$, each iteration performs two projections onto $\XX$:
\begin{subequations}
\label{eq:extragradient}
\begin{align}
    y^k &= P_\XX(x^k - \alpha\,\phi(x^k)) \label{eq:extragradient_y}\\
    x^{k+1} &= P_\XX(x^k - \alpha\,\phi(y^k)) \label{eq:extragradient_x}
\end{align}
\end{subequations}
where $P_\XX(z) = \operatorname{argmin}_{x \in \XX}\,\|x - z\|^2$ denotes the Euclidean projection onto $\XX$.
Since $\phi$ is affine with Lipschitz constant $L = \|F\|_2$, the iterations $y^k$ and $x^k$ in~\eqref{eq:extragradient} are guaranteed to converge to a v-GNE whenever $\alpha < 1/L$~\cite{Kor76}. 
Note that,
in the presence of general linear constraints defining $\XX$, the method requires solving two QPs 
per iteration to solve the least-distance problems~\eqref{eq:extragradient}.

To avoid the need of solving two QPs per iteration, as proposed by the classical extragradient method, we can use, for example, 
the Douglas-Rachford splitting method~\cite{LionsMercier1979}, which requires only one projection per iteration\footnote{We took the extragradient method as a baseline as DR performed worse in our experiments.}. Moreover, to obtain an approach which requires no projection except a clipping step, we can apply the extragradient method to the so called augmented game (EG-aug), obtained by extending the game with a dual player as in~\cite{TatKam19}. For this purpose, we introduce the Lagrangian function for each primal player $i\in[N]$ as follows: 
\[L_i(x,\lambda) = J_i(x) + \lambda^\T (Ax - b) + \nu^\T (Ex - e).\]
The cost of the corresponding dual player is defined as 
\[g(x,\lambda,\nu)=-\lambda^\T (Ax - b) - \nu^\T (Ex - e).\] 
Thus, the augmented game is defined as a game with $N$ primal players minimizing local Lagrangians for the primal variables in $x$ and one dual player who minimizes $g(x,\lambda,\nu)$ over $\nu\in\rr^q$, and $\lambda\in\rr^m_{+}$. As the resulting augmented game inherits monotonicity and Lipschitz continuity of the pseudogradient (due to the structure of the original game), we can apply the extragradient method to the augmented game, using an appropriate step size $\beta < 1/L_{\rm aug}$, where $L_{\rm aug}$ is the Lipschitz constant of the pseudogradient of the augmented game. The resulting iterations read: 
\begin{subequations}
\label{eq:extragradient_aug}
\begin{align}
    \bar x^k &= x^k - \beta\bigl(F x^k + f + A^\T \lambda^k + E^\T \nu^k\bigr)\\
    \bar\lambda^k &= \bigl[\lambda^k - \beta (b - A x^k)\bigr]_+\\
    \bar\nu^k &= \nu^k - \beta (e - E x^k)\\
    x^{k+1} &= \bar x^k - \beta\bigl(F \bar x^k + f + A^\T \bar\lambda^k + E^\T \bar\nu^k\bigr)\\
    \lambda^{k+1} &= \bigl[\bar\lambda^k - \beta (b - A \bar x^k)\bigr]_+\\
    \nu^{k+1} &= \bar\nu^k - \beta (e - E \bar x^k)
\end{align}
\end{subequations}
where $(\bar x^k,\bar\lambda^k,\bar\nu^k)$ plays the role of the extrapolated point $y^k$ in~\eqref{eq:extragradient}.  We note that the projection in this case is replaced by a simple clipping of the dual variable $\lambda$ to the nonnegative orthant, which does not require solving extra QPs for projection calculations.

\section{Quadratic Programming Reformulation}
%\subsection{KKT conditions}
We now characterize v-GNEs through the KKT conditions of the players' optimization problems, which will serve as the starting point for the QP reformulation developed in this section. Since each cost $J_i$ is convex and the shared constraint set $\XX$ is polyhedral, a point $x^\star$ solves the variational inequality~\eqref{eq:VI}, and hence is a v-GNE, if and only if there exists a vector of Lagrange multipliers $\lambda^\star \in \rr^m$, $\nu^\star\in\rr^q$ associated with the shared constraints and common to all players, such that the pair $(x^\star,\lambda^\star,\nu^\star)$ satisfies the joint Karush--Kuhn--Tucker (KKT) conditions of the $N$ optimization problems~\eqref{eq:QPi}; see~\cite[Theorem 12.1]{NW06}. These conditions are given by
\begin{subequations}
\begin{align}
  F \xstar + f + A^\T \lstar + E^\T \nstar &= 0 \label{eq:KKT_stat}\\
  E \xstar &= e                \label{eq:KKT_eq}\\
  A \xstar &\leq b              \label{eq:KKT_prim}\\
  \lstar   &\ge 0              \label{eq:KKT_dual}\\
  (\lstar)^\T (A \xstar - b)   &= 0 \label{eq:KKT_comp}
\end{align}
\label{eq:KKT}%
\end{subequations}
where $\nstar \in \rr^q$ is the shared dual vector for the equality constraints and is unrestricted in sign.

Next, we exploit the joint KKT conditions~\eqref{eq:KKT} to derive and analyze a quadratic program whose solution set coincides with the set of v-GNEs of the considered game.
Let us denote by $F_s = \tfrac{1}{2}(F + F^\T)$ the \emph{symmetric part} of $F$. In the special case $F = F_s$,~\eqref{eq:QPi} is a {\it potential game}, and its variational GNEs are trivially the solutions of the following QP
\begin{equation}
    \begin{aligned}
    \min_{x_i} & \tfrac{1}{2} x^\T F x + f^\T x = \tfrac{1}{2} x^\T F_s x + f^\T x\\
    \textrm{s.t. }& Ax \leq b,\ Ex = e
    \end{aligned}
    \label{eq:QP-potential}
\end{equation}
whose KKT conditions are given by~\eqref{eq:KKT}. In general, however, the pseudogradient matrix $F$ is not symmetric, so the game is not a potential game, and it is a priori unclear whether the v-GNE problem admits any QP reformulation at all. The following result shows that it does, in the primal-dual variables $(x,\lambda,\nu)$.

\begin{lemma}
\label{lem:QP_reformulation}
Consider the following quadratic programming problem 
\begin{subequations}
\begin{align}
  \min_{x,\lambda,\nu}\quad &  x^\T F_s x + f^\T x + b^\T \lambda + e^\T \nu\label{eq:QP_obj}\\
  \textrm{s.t.}\quad & F x + f + A^\T \lambda + E^\T \nu = 0\label{eq:QP_stat}\\
            & E x = e \label{eq:QP_eq}\\
            & A x \leq b \label{eq:QP_prim}\\
            & \lambda \geq 0 \label{eq:QP_lambda}
\end{align}
\label{eq:QP}%
\end{subequations}
If~\eqref{eq:QP} is infeasible, then the LQ-GNEP~\eqref{eq:QPi} admits no v-GNE.
If instead~\eqref{eq:QP} is feasible, then, under Standing Assumption~\ref{assum:mon}, its optimal value is finite, nonnegative, and attained at some $(x^\star,\lambda^\star,\nu^\star)$; moreover, $x^\star$ is a v-GNE of~\eqref{eq:QPi} if and only if this optimal value is zero.
\end{lemma}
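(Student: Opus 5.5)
The plan is to show that, on the feasible set of~\eqref{eq:QP}, the objective~\eqref{eq:QP_obj} equals the complementarity gap $-\lambda^\T(Ax-b)$. Every claim of the lemma then follows from this identity.

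\emph{Step 1 (infeasibility).} If the LQ-GNEP has a v-GNE $x^\star$, the KKT characterization~\eqref{eq:KKT} (cf.~\cite[Theorem 12.1]{NW06}) gives multipliers $(\lambda^\star,\nu^\star)$ such that $(x^\star,\lambda^\star,\nu^\star)$ satisfies~\eqref{eq:KKT_stat}--\eqref{eq:KKT_dual}, and these are exactly~\eqref{eq:QP_stat}--\eqref{eq:QP_lambda}. So if~\eqref{eq:QP} is infeasible, no v-GNE exists.

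\emph{Step 2 (key identity).} Take any feasible $(x,\lambda,\nu)$ and multiply~\eqref{eq:QP_stat} on the left by $x^\T$:
\[
x^\T F x + f^\T x + (Ax)^\T\lambda + (Ex)^\T\nu = 0 .
\]
Since $x^\T F x = x^\T F_s x$ and $Ex=e$, this yields
\[
x^\T F_s x + f^\T x + b^\T\lambda + e^\T\nu = b^\T\lambda - \lambda^\T A x = \lambda^\T(b-Ax).
\]
By~\eqref{eq:QP_prim} and~\eqref{eq:QP_lambda}, the right-hand side is nonnegative. The objective is therefore bounded below by $0$ on the feasible set, so the optimal value is finite and nonnegative.

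\emph{Step 3 (attainment).} The objective is a quadratic function, convex because $F_s\succeq 0$ under Standing Assumption~\ref{assum:mon}, bounded below on a nonempty polyhedron. By the Frank--Wolfe theorem, a quadratic function bounded below on a nonempty polyhedron attains its infimum. Convexity is not even required for this step. Hence a minimizer $(x^\star,\lambda^\star,\nu^\star)$ exists.

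\emph{Step 4 (equivalence).} If the optimal value is zero, Step 2 gives $(\lambda^\star)^\T(Ax^\star-b)=0$, so the minimizer satisfies all of~\eqref{eq:KKT}. By the KKT characterization, $x^\star$ then solves $VI(\XX,\phi)$ and is a v-GNE. Conversely, suppose some v-GNE exists with KKT multipliers. That triple is feasible for~\eqref{eq:QP}, and by~\eqref{eq:KKT_comp} and Step 2 it has objective value zero. Since the optimal value is nonnegative, it must equal zero.

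The statement is phrased about the particular minimizer $x^\star$. What needs to be argued is: when the value is zero, every minimizer has zero gap and hence its $x$-part is a v-GNE (Step 4, first half). When the value is positive, no v-GNE exists at all, since any v-GNE would produce a feasible point of value zero (Step 4, second half); in particular $x^\star$ is not a v-GNE.

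The only step beyond direct computation is attainment. I would cite Frank--Wolfe there rather than rely on coercivity, because the feasible set is generally unbounded and the objective need not be coercive in $(\lambda,\nu)$.
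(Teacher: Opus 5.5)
Your proposal is correct and follows essentially the same route as the paper. The paper also establishes the identity $x^\T F_s x + f^\T x + b^\T\lambda + e^\T\nu = \lambda^\T(b-Ax)$ on the feasible set, phrasing it as the QP coinciding with the complementarity-gap minimization problem. It then uses this identity for nonnegativity, invokes the Frank--Wolfe theorem for attainment, and proves both directions of the equivalence through the joint KKT conditions exactly as you do. Your extra remarks, that convexity is not needed for Frank--Wolfe and that every minimizer is a v-GNE when the value is zero, are accurate refinements rather than a different argument.
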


\begin{proof}
The feasible set of~\eqref{eq:QP}, defined by~\eqref{eq:QP_stat}--\eqref{eq:QP_lambda}, is precisely the set of triples $(x,\lambda,\nu)$ satisfying the KKT conditions~\eqref{eq:KKT} except for complementarity slackness~\eqref{eq:KKT_comp}. If this set is empty, no triple can satisfy~\eqref{eq:KKT} either, and since every v-GNE of~\eqref{eq:QPi} has an associated multiplier pair $(\lambda^\star,\nu^\star)$ satisfying~\eqref{eq:KKT}, the LQ-GNEP~\eqref{eq:QPi} admits no v-GNE. This proves the claim in the infeasible case.

Assume now that~\eqref{eq:QP} is feasible, and consider the problem of minimizing the complementarity slackness term in~\eqref{eq:KKT_comp}
\begin{equation}
\begin{aligned}
\min_{x,\lambda,\nu}\ &\lambda^\T (b - A x)\\
\textrm{s.t. }& \mbox{\eqref{eq:QP_stat}--\eqref{eq:QP_lambda}},
\end{aligned}
\label{eq:slack-minimization}
\end{equation}
which, by construction, shares the feasible set of~\eqref{eq:QP}. By substituting $A^\T \lambda=-F x - f -  E^\T \nu$ (from~\eqref{eq:QP_stat}) and $Ex=e$ (from~\eqref{eq:QP_eq}) into the objective of~\eqref{eq:slack-minimization}, we obtain, for every feasible $(x,\lambda,\nu)$,
 \begin{align}\label{eq:equiv}
    \lambda^\T (b - A x)&=\lambda^\T b + x^\T(F x + f + E^\T \nu) \cr
    &= \lambda^\T b + x^\T F x + f^\T x + e^\T \nu\cr
    &= x^\T F_s x + f^\T x + b^\T \lambda + e^\T \nu,
    \end{align}
i.e., the objective of~\eqref{eq:slack-minimization} coincides with that of~\eqref{eq:QP} on their common feasible set, the two problems are in fact the same problem.

      Since $Ax\leq b$ and $\lambda\geq 0$ at every feasible point (by~\eqref{eq:QP_prim} and~\eqref{eq:QP_lambda}), we have $\lambda^\T(b-Ax)\geq 0$, and hence, by~\eqref{eq:equiv}, the objective of~\eqref{eq:QP} is bounded below by zero on its feasible set. Moreover, under Standing Assumption~\ref{assum:mon} the objective of~\eqref{eq:QP} is a convex quadratic function ($F_s\succeq 0$), and its feasible set, defined by~\eqref{eq:QP_stat}--\eqref{eq:QP_lambda}, is polyhedral. A convex quadratic function that is bounded below on a polyhedron attains its infimum on that polyhedron (the classical Frank--Wolfe existence theorem for quadratic programs, see, e.g.,~\cite{BSS06}), so the optimal value of~\eqref{eq:QP} is finite, nonnegative, and attained at some $(x^\star,\lambda^\star,\nu^\star)$.

It remains to relate this optimal value to the existence of a v-GNE. If $(x^\star,\lambda^\star,\nu^\star)$ attains a zero optimal value, then, being feasible for~\eqref{eq:QP} and satisfying $(\lambda^\star)^\T(b-Ax^\star)=0$ by~\eqref{eq:equiv}, it satisfies the joint KKT conditions~\eqref{eq:KKT} of the game, and hence $x^\star$ is a v-GNE of~\eqref{eq:QPi}. Conversely, if $x^\star$ is a v-GNE of~\eqref{eq:QPi}, then there exist $\lambda^\star,\nu^\star$ such that $(x^\star,\lambda^\star,\nu^\star)$ satisfies~\eqref{eq:KKT}; this triple is in particular feasible for~\eqref{eq:QP}, and its objective value equals $(\lambda^\star)^\T(b-Ax^\star)=0$ by~\eqref{eq:KKT_comp}, so the (nonnegative) optimal value of~\eqref{eq:QP} must be zero.
\end{proof}

\begin{remark}
If the shared constraint set $\XX$ defined in~\eqref{eq:XX} is non-empty
and compact, then LQ-GNEP~\eqref{eq:QPi} admits a v-GNE~\cite{FP03}, and, thus, the QP~\eqref{eq:QP} always admits a solution with the zero optimal value.
Clearly, when the game is merely monotone ($F_s \succeq 0$), the QP problem~\eqref{eq:QP} is convex.
Moreover, if the game is strongly monotone ($F_s\succ 0$) and $\XX$ nonempty, a v-GNE solution
$\xstar$ exists and is unique~\cite[Thm.~2.3.3]{FP03}, and therefore the
QP~\eqref{eq:QP} has a solution $(\xstar,\lstar,\nstar)$ with zero optimal value.
\end{remark}
In the next section we provide the analysis of some methods solving the QP in~\eqref{eq:QP}. In particular, we will connect the convergence rate of the objective $x^\T F_s x + f^\T x + b^\T \lambda + e^\T \nu$ with the guarantees of an algorithm's iterate to be an $\varepsilon$-appGNE. To do so, we will use the following result. 
\begin{lemma}\label{lem:eGNE}
    Assume that $(\tx, \tl, \tn)$ is a feasible point with respect to the QP~\eqref{eq:QP} such that
    \[\tx^\T F_s \tx + f^\T \tx + b^\T \tl + e^\T \tn\le \varepsilon.\]
    Then $\tx$ is a $\varepsilon$-appGNE in the game defined by~\eqref{eq:QPi}.
\end{lemma}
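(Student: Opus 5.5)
The plan is to reduce the claim to a first-order convexity inequality for each player's cost, using the identity~\eqref{eq:equiv} from the proof of Lemma~\ref{lem:QP_reformulation}. Since $(\tx,\tl,\tn)$ is feasible for~\eqref{eq:QP}, that identity holds, and it lets me restate the hypothesis as
\[
\tl^\T(b-A\tx)\le\varepsilon .
\]
This is a bound on the complementarity violation. Feasibility also gives $\tx\in\XX$, because~\eqref{eq:QP_eq} and~\eqref{eq:QP_prim} are exactly the constraints defining $\XX$. It also gives $\tl\ge 0$.

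Next I fix a player $i\in[N]$ and a feasible unilateral deviation $z_i$, and set $z=(z_i,\tx_{-i})\in\XX$. By Standing Assumption~\ref{assum:mon}, $J_i$ is convex in $x_i$, since $Q^{(i)}_{ii}\succeq 0$. The gradient inequality in the $i$-th block then gives
\[
J_i(z)\ge J_i(\tx)+\nabla_{x_i}J_i(\tx)^\T(z_i-\tx_i).
\]
The vector $z-\tx$ vanishes outside block $i$, and $\nabla_{x_i}J_i(\tx)$ is the $i$-th block of $\phi(\tx)=F\tx+f$. Hence
\[
\nabla_{x_i}J_i(\tx)^\T(z_i-\tx_i)=\phi(\tx)^\T(z-\tx).
\]

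I then substitute the stationarity constraint~\eqref{eq:QP_stat}, namely $\phi(\tx)=-A^\T\tl-E^\T\tn$. This gives
\[
\phi(\tx)^\T(z-\tx)=-\tl^\T A(z-\tx)-\tn^\T E(z-\tx).
\]
The equality term vanishes, since $Ez=E\tx=e$. The inequality term splits as
\[
-\tl^\T A(z-\tx)=\tl^\T(b-Az)-\tl^\T(b-A\tx).
\]
Here the first summand is nonnegative, because $\tl\ge 0$ and $Az\le b$. The second summand is at most $\varepsilon$ by the restated hypothesis. Therefore $J_i(z)\ge J_i(\tx)-\varepsilon$ for every $i$ and every feasible deviation, which is exactly Definition~\ref{def:appGNE}.

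The only delicate step is handling the coupling through the shared multiplier. One must note that the first-order term involves only the $i$-th block of $\phi$. It can nevertheless be written as a full inner product with the shared $A^\T\tl$, because the deviation is unilateral. Once that is in place, the bound follows directly from the sign conditions and $\tl^\T(b-A\tx)\le\varepsilon$.
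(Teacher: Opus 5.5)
Your proof is correct and uses essentially the same ingredients as the paper's. These are the identity~\eqref{eq:equiv} giving $\tl^\T(b-A\tx)\le\varepsilon$, the stationarity substitution $\phi(\tx)=-A^\T\tl-E^\T\tn$ with $\tl\ge 0$ and $Az\le b$, the block structure of a unilateral deviation, and convexity of $J_i(\cdot,\tx_{-i})$. The only difference is that the paper first bounds the gap function $\operatorname{Gap}(\tx)=\sup_{z\in\XX}\phi(\tx)^\T(\tx-z)\le\varepsilon$ over all of $\XX$ and then specializes to $z=(z_i,\tx_{-i})$, whereas you work with the unilateral deviation directly.
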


\begin{proof}
Due to~\eqref{eq:equiv}, we conclude that
\begin{align}\label{eq:complGap}
\tl^\T (b - A \tx)\le\varepsilon.
\end{align}
Next, let us consider the primal gap function
\begin{align}\label{eq:Gap}
  \operatorname{Gap}(\tx)=\sup_{z\in\XX} \phi(\tx)^\T(\tx-z),  
\end{align}
where, as before, $\phi(x) = Fx+f$. 
From the stationarity condition in~\eqref{eq:QP_stat} and due to feasibility of $(\tx, \tl, \tn)$, we obtain $\phi(\tx)+A^\T\tl + E^\T \tn = 0$. Thus, $\phi(\tx)=-A^\T\tl - E^\T\tn$, and for any \(z\in\XX\),
\begin{align*}
\phi(\tx)^\T(\tx-z)
&=
-(\tl^\T A + \tn^\T E) (\tx-z)=
\tl^\T(Az-A\tx),
\end{align*}
where in the last equality we used the fact that $E (\tx-z) = 0$, as $\tx\in \XX$ and $z\in \XX$. 
Moreover, since $z\in\XX$, we have $Az\le b$. Together with
$\tl\ge 0$ and~\eqref{eq:complGap}, it follows that 
$\tl^\T(Az-A\tx) \le
\tl^\T(b-A\tx) \le
\varepsilon$.
Thus, we conclude that
\[
\operatorname{Gap}(\tx)
=
\sup_{z\in\XX}
\phi(\tx)^\T(\tx-z) =\sup_{z\in\XX} \tl^\T(Az-A\tx)
\le
\varepsilon.
\]
Fix a player $i$ and let $z_i$ be an admissible unilateral deviation, i.e., $(z_i,\tx_{-i})\in\XX$.
Let $z=(z_i,\tx_{-i})$.
As $z\in\XX$, it follows from the definition of the gap function that
$\phi(\tx)^\T(\tx-z)
\le
\operatorname{Gap}(\tx)
\le
\varepsilon
$.
Since $z$ differs from $\tx$ only in the $i$-th component, $
\phi(\tx)^\T(\tx-z) =
\bigl(\nabla_{x_i}J_i(\tx)\bigr)^\T(\tx_{-i}-z_i)$.
Hence,
$\bigl(\nabla_{x_i}J_i(\tx)\bigr)^\T(\tx_{-i}-z_i)
\le
\varepsilon$.
Due to convexity of \(J_i(\cdot,\tx_{-i})\) (see Standing Assumption~\ref{assum:mon}), we have
\[
J_i(\tx_{i},\tx_{-i})-J_i(z_i,\tx_{-i})
\le
\bigl(\nabla_{x_i}J_i(\tx)\bigr)^\T(\tx_{i}-z_i).
\]
By combining the last two inequalities, we obtain
$
J_i(\tx_{i},\tx_{-i})
\le
J_i(z_i,\tx_{-i}) + \varepsilon
$,
which, according to Definition~\ref{def:appGNE}, implies that $\tx$ is $\varepsilon$-appGNE.
\end{proof}

\section{Solution Methods}
\label{sec:solution_methods}
The QP problem~\eqref{eq:QP} can be solved by any off-the-shelf convex QP solver that can handle a singular Hessian matrix. While a few QP solvers do not require strong convexity~\cite{GC24,SBGBB20}, %Mosek, 
many others either require Hessian inversion explicitly (e.g., dual-QP methods) or are not numerically robust in the case where the Hessian is not positive definite. In this section we briefly discuss some efficient solution methods 
to cope with the lack of strong convexity of the QP problem~\eqref{eq:QP} by leveraging QP solvers 
designed for strongly convex QPs. 

\subsection{QP Regularization}
Let us analyze the effect of adding a regularization in the QP~\eqref{eq:QP}:  
\begin{align}\label{eq:QP_r}
  \min_{x,\lambda,\nu}\quad &  x^\T F_s x + f^\T x + b^\T \lambda + e^\T \nu \cr
  &\qquad\qquad + \tfrac{\rho}{2}(\|x\|^2 + \|\lambda\|^2 + \|\nu\|^2)\cr
  \textrm{s.t.}\quad & F x + f + A^\T \lambda + E^\T \nu = 0\cr
            & E x = e \cr
            & A x \leq b \cr
            & \lambda \geq 0 
\end{align}
which makes the resulting QP strongly convex when $\rho>0$.

\begin{proposition}
\label{prop:eps-GNE-eq}
Let $\rho > 0$, and assume that a v-GNE primal-dual triple $(\xstar,\lstar,\nstar)$ of~\eqref{eq:KKT} exists.
Let $(x(\rho),\lambda(\rho),\nu(\rho))$ be the unique solution of the strongly convex QP in~\eqref{eq:QP_r}. 
Then $x(\rho) \in \XX$, and $x(\rho)$ is an $\varepsilon(\rho)$-appGNE of~\eqref{eq:QPi}
according to Definition~\ref{def:appGNE} with
\begin{equation}
  \varepsilon(\rho) = \lambda(\rho)^\T(b - Ax(\rho))
  \;\leq\;
  \frac{\rho}{2}\bigl(\|\xstar\|^2 + \|\lstar\|^2 + \|\nstar\|^2\bigr).
  \label{eq:eps-bound-eq}
\end{equation}
In particular, $\varepsilon(\rho) = \mathcal{O}(\rho)$ as $\rho \to 0$.
\end{proposition}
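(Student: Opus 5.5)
Write $q(x,\lambda,\nu)= x^\T F_s x + f^\T x + b^\T \lambda + e^\T \nu$ for the objective of~\eqref{eq:QP}, and $R(x,\lambda,\nu)=\tfrac12(\|x\|^2+\|\lambda\|^2+\|\nu\|^2)$ for the regularizer. The plan is to compare the regularized optimum with the v-GNE triple $(\xstar,\lstar,\nstar)$, which is feasible for~\eqref{eq:QP_r}, and then invoke Lemma~\ref{lem:eGNE}.

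\textbf{Preliminaries.} When $\rho>0$, the objective of~\eqref{eq:QP_r} is strongly convex, since $F_s\succeq 0$ by Standing Assumption~\ref{assum:mon}. The feasible polyhedron is nonempty, because it contains $(\xstar,\lstar,\nstar)$. Hence the minimizer $(x(\rho),\lambda(\rho),\nu(\rho))$ exists and is unique. Its feasibility gives $Ex(\rho)=e$ and $Ax(\rho)\le b$, so $x(\rho)\in\XX$.

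\textbf{Identifying $\varepsilon(\rho)$.} Since the minimizer is feasible for~\eqref{eq:QP}, identity~\eqref{eq:equiv} gives
\[
q(x(\rho),\lambda(\rho),\nu(\rho))=\lambda(\rho)^\T(b-Ax(\rho))=\varepsilon(\rho)\ge 0 .
\]
Applying Lemma~\ref{lem:eGNE} with $\varepsilon=\varepsilon(\rho)$ then shows that $x(\rho)$ is an $\varepsilon(\rho)$-appGNE. It remains only to prove the bound in~\eqref{eq:eps-bound-eq}.

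\textbf{The bound.} Optimality of the regularized solution against the feasible triple $(\xstar,\lstar,\nstar)$ gives
\[
q(x(\rho),\lambda(\rho),\nu(\rho))+\rho R(x(\rho),\lambda(\rho),\nu(\rho))\le q(\xstar,\lstar,\nstar)+\rho R(\xstar,\lstar,\nstar).
\]
By Lemma~\ref{lem:QP_reformulation}, or directly from~\eqref{eq:KKT_comp} and~\eqref{eq:equiv}, we have $q(\xstar,\lstar,\nstar)=0$. Since $R\ge 0$, we can drop the regularizer on the left. This yields
\[
\varepsilon(\rho)\le \frac{\rho}{2}\bigl(\|\xstar\|^2+\|\lstar\|^2+\|\nstar\|^2\bigr),
\]
which is $\mathcal O(\rho)$.

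\textbf{Expected difficulty.} The argument is short, and I do not anticipate a genuine obstacle. The only point needing care is to note that the bound holds for any v-GNE triple, so one may take the minimum-norm one if desired. One must also check that existence of the regularized minimizer uses only feasibility, not boundedness of the feasible set. This is guaranteed because the objective is coercive: it is strongly convex in all variables, including $\lambda$ and $\nu$, which appear only linearly in $q$ but quadratically in $R$.
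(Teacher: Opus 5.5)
Your proposal is correct and follows essentially the same argument as the paper: identify the unregularized objective with the complementarity gap via~\eqref{eq:equiv}, compare the regularized optimum against the feasible KKT triple (which has zero gap), drop the nonnegative regularizer, and conclude with Lemma~\ref{lem:eGNE}. You also justify existence, uniqueness, and $x(\rho)\in\XX$ explicitly via strong convexity and feasibility; this is a small addition that the paper leaves implicit.
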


\begin{proof}
Let $\varepsilon(\rho) = \lambda(\rho)^\T(b-Ax(\rho))$. 
By Lemma~\ref{lem:QP_reformulation}, we have that
\begin{align}\label{eq:eps}
  \varepsilon(\rho) &= \lambda(\rho)^\T(b-Ax(\rho))= x(\rho)^\T F_s x(\rho)\cr &+f^\T x(\rho)+b^\T\lambda(\rho)+e^\T\nu(\rho).
\end{align}
Hence the regularized objective at $(x(\rho),\lambda(\rho),\nu(\rho))$ equals
$\lambda(\rho)^\T(b-Ax(\rho)) + \tfrac{\rho}{2}(\|x(\rho)\|^2+\|\lambda(\rho)\|^2+\|\nu(\rho)\|^2)$.
Since $(\xstar,\lstar,\nstar)$ is feasible for the regularized QP with zero complementarity gap, its objective value equals $\tfrac{\rho}{2}(\|\xstar\|^2+\|\lstar\|^2+\|\nstar\|^2)$.
Optimality of $(x(\rho),\lambda(\rho),\nu(\rho))$ therefore gives:
\[
    \begin{aligned}
  \varepsilon(\rho) + \tfrac{\rho}{2}(\|x(\rho)\|^2&+\|\lambda(\rho)\|^2+\|\nu(\rho)\|^2)\\
  &\leq \tfrac{\rho}{2}\bigl(\|\xstar\|^2+\|\lstar\|^2+\|\nstar\|^2\bigr),
    \end{aligned}
\]
and the bound~\eqref{eq:eps-bound-eq} follows since the second term on the left is non-negative. 

Finally taking into account the second equality in~\eqref{eq:eps} and using Lemma~\ref{lem:eGNE}, we conclude that $x(\rho)$ is an $\varepsilon(\rho)$-appGNE of the game-theoretic problem~\eqref{eq:QPi}. 
\end{proof}
Note that the bound~\eqref{eq:eps-bound-eq} is an {\it a priori} estimate of the suboptimality of $x(\rho)$ as an $\varepsilon(\rho)$-appGNE for a given problem, and it is not tight in general. However, the actual suboptimality parameter $\varepsilon(\rho)$ is immediately available {\it a posteriori} from the solution of the regularized QP as $\varepsilon(\rho) = \lambda(\rho)^\T(b-Ax(\rho))$.

\begin{remark}
Although the LQ-GNEP~\eqref{eq:QPi} may possess multiple v-GNEs when the game is only merely monotone, the regularized solution $(x(\rho),\lambda(\rho),\nu(\rho))$ is unique for every $\rho>0$, since~\eqref{eq:QP_r} is strongly convex. Furthermore, since $F_s\succeq 0$ and $\XX$ is polyhedral, the optimal solution set of the unregularized QP~\eqref{eq:QP} is convex, so it admits a unique element $(x^\star_{\min},\lambda^\star_{\min},\nu^\star_{\min})$ of minimum Euclidean norm. A standard argument for Tikhonov-regularized convex programs (see, e.g.,~\cite{FP03}) shows that $(x(\rho),\lambda(\rho),\nu(\rho)) \to (x^\star_{\min},\lambda^\star_{\min},\nu^\star_{\min})$ as $\rho\to 0$; in particular, $x(\rho)$ converges to the minimum-norm v-GNE of~\eqref{eq:QPi} among the game's possibly infinitely many equilibria.
\end{remark}

In the following subsection we formulate a proximal-point method to state an asymptotic convergence to the set of v-GNEs.

\subsection{Accelerated Proximal-Point QP Iterations}
\label{sec:proximal-point}
In order to solve the QP~\eqref{eq:QP} by using a strongly convex QP solver, we apply
the accelerated proximal-point (APP) iterations  to the following non-smooth convex problem~\cite{Gul92}:
\begin{equation}
\min_{z} \Psi(z)=
x^\T F_sx+f^\T x+b^\T\lambda+e^\T\nu+\II(z)
\label{eq:non-smooth}
\end{equation}
where $z = (x, \lambda, \nu)\in\rr^{n+m+q}$ and $\II:\rr^{n+m+q}\to\rr\cup\{+\infty\}$ is the indicator function of the feasible set defined by the convex constraints in~\eqref{eq:QP}.
Let \(z^0=(x^0,\lambda^0,\nu^0)\in\rr^{n+m+q}\), let
\(\{\gamma_k\}_{k\ge0}\) be positive proximal parameters, and set
\(\theta_0=1\). First compute
\[
z^1\in\arg\min_{z}
\left\{
\Psi(z)+\frac{1}{2\gamma_0}\|z-z^0\|^2
\right\}.
\]
For \(k=1,2,\ldots\), write $y^k=(\bar x^k,\bar\lambda^k,\bar\nu^k)$,
$z^{k+1}=(x^{k+1},\lambda^{k+1},\nu^{k+1})$,
and perform
\begin{subequations}
\begin{align}
\frac{\theta_k^2}{\gamma_k}
&=
(1-\theta_k)\frac{\theta_{k-1}^2}{\gamma_{k-1}},
\qquad
\theta_k\in(0,1]
\label{eq:guler_theta}
\\
y^k
&=
z^k+
\theta_k\left(\frac{1}{\theta_{k-1}}-1\right)
(z^k-z^{k-1})
\label{eq:guler_extrapolation}
\\
z^{k+1}
&\in
\arg\min_{x,\lambda,\nu}
\Bigg\{
x^\T F_sx+f^\T x+b^\T\lambda+e^\T\nu
\nonumber\\
&\qquad
+
\frac{1}{2\gamma_k}
\left(
\|x-\bar x^k\|^2
+
\|\lambda-\bar\lambda^k\|^2
+
\|\nu-\bar\nu^k\|^2
\right)
\Bigg\}
\nonumber\\
&\quad
\text{s.t.~\eqref{eq:QP_stat}--\eqref{eq:QP_lambda}}
\label{eq:guler_prox_qp}
\end{align}
\label{eq:prox-QP}%
\end{subequations}
where the proximal parameter $\frac{1}{\gamma_k}> 0$ acts as the regularization parameter of the QP.
According to the result in~\cite[Theorem 2.2]{Gul92}, the following holds for the $k$th iterate of the APP, in the case a solution $z^*$ to the QP~\eqref{eq:QP} exists: 
\[\Psi(z_k) - \Psi(z^*) = \mathcal O\left(\frac{1}{
\left(\sum_{j=0}^{k-1}\sqrt{\gamma_j}\right)^2
}
\right).
\]
In particular, if \(\gamma_j\equiv\gamma>0\), then
\[\
\Psi(z_k)-\Psi^\star
=
\mathcal O\left(\frac{1}{k^2}\right).
\]
Taking into account that $\Psi(z_k) = x_k^\T F_sx_k+f^\T x_k+b^\T\lambda_k+e^\T\nu_k$ and $\Psi(z^*) = 0$, we conclude by Lemma~\ref{lem:eGNE} that the following result holds. 
\begin{proposition}\label{prop:AccPP}
    The iterate $x_k$ of the accelerated proximal point procedure in~\eqref{eq:prox-QP} with some constant $\gamma_k=\gamma>0$ is an $\mathcal{O}\!\left(\frac{1}{k^2}\right)$-appGNE in the game defined by~\eqref{eq:QPi}.
\end{proposition}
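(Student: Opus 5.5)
The plan is to chain three facts: Güler's rate for the accelerated proximal-point method, the identification of the optimal value of the QP~\eqref{eq:QP}, and Lemma~\ref{lem:eGNE}.

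First, I will make sure the iterates are well defined and feasible. Feasibility of~\eqref{eq:QP} is implicit in the statement, since the rate is only meaningful when a solution $z^\star$ exists; I will assume a v-GNE triple $(\xstar,\lstar,\nstar)$ exists, as in Proposition~\ref{prop:eps-GNE-eq}. Then the feasible set is a nonempty polyhedron, and each subproblem~\eqref{eq:guler_prox_qp} minimizes a strongly convex quadratic over it (because $F_s\succeq 0$ by Standing Assumption~\ref{assum:mon}). So $z^{k+1}$ exists, is unique, and satisfies~\eqref{eq:QP_stat}--\eqref{eq:QP_lambda}. The same applies to $z^1$. Hence every $z^k$ with $k\ge1$ is feasible for~\eqref{eq:QP}, and $\Psi(z^k)$ equals the finite quadratic objective $x_k^\T F_s x_k+f^\T x_k+b^\T\lambda_k+e^\T\nu_k$ rather than $+\infty$. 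The extrapolated points $y^k$ need not be feasible, but this does not matter because they only enter the proximal term.

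Second, I will invoke~\cite[Theorem 2.2]{Gul92} for the proper, closed, convex function $\Psi$ in~\eqref{eq:non-smooth}. With $\gamma_j\equiv\gamma$ it gives
\[
\Psi(z^k)-\Psi(z^\star)\le \frac{C}{k^2},
\]
where $C$ depends only on $\gamma$ and $\|z^0-z^\star\|$ (Güler's bound is of the form $4\|z^0-z^\star\|^2/(\gamma k^2)$ up to constants). By Lemma~\ref{lem:QP_reformulation}, the existence of a v-GNE forces the optimal value to be zero, so $\Psi(z^\star)=0$.

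Third, I will combine these. Since $z^k$ is feasible and its objective value is at most $\varepsilon_k:=C/k^2$, Lemma~\ref{lem:eGNE} applies directly. It shows that $x_k\in\XX$ is an $\varepsilon_k$-appGNE. By~\eqref{eq:equiv}, the a posteriori certificate is $\varepsilon_k=\lambda_k^\T(b-Ax_k)$.

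The main obstacle is matching Güler's setting precisely. His theorem is stated for exact proximal steps with the $\theta_k$ recursion~\eqref{eq:guler_theta}, and the bound holds for the prox outputs $z^k$, not for $y^k$. I will check that the scheme~\eqref{eq:prox-QP} coincides with his, in particular the initial step and the convention $\theta_0=1$. I will also confirm that the constant in the $\mathcal O(1/k^2)$ bound involves only $\|z^0-z^\star\|$ and $\gamma$. If the indexing differs by one, the rate is unaffected.
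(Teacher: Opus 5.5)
Your proposal is correct and follows the paper's argument: you invoke Güler's Theorem~2.2 with constant $\gamma$ to get $\Psi(z^k)-\Psi(z^\star)=\mathcal O(1/k^2)$, use $\Psi(z^\star)=0$ (which, as you make explicit, requires that a v-GNE exists), and conclude via Lemma~\ref{lem:eGNE}. Your extra checks fill in details the paper leaves implicit: that every $z^k$ with $k\ge 1$ is feasible, so $\Psi(z^k)$ is the finite QP objective, and that $\theta_0=1$ makes the first step a plain prox step (the scheme is then FISTA with zero smooth part), so the constant depends only on $\|z^0-z^\star\|$ and $\gamma$.
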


Note that, in quantifying the convergence rate of the iterations, we assume that each QP in~\eqref{eq:guler_prox_qp} is solved in a bounded (although, possibly large) number of operations, such as by an active-set method that
has a finite worst-case complexity due to the finite number of possible active sets. 

\begin{remark}
While the proximal parameters $\gamma_k$ could in principle be chosen time-varying with a potential improvement in theoretical guarantees for the approximation rate, letting the regularization parameter $1/\gamma_k$ increase with $k$ is problematic in practice: as $\gamma_k\to 0$, the term $\tfrac{1}{2\gamma_k}\|\cdot\|^2$ in~\eqref{eq:guler_prox_qp} dominates the KKT system of the subproblem, whose entries then span increasingly disparate orders of magnitude (the $F$-, $A$-, and $E$-related rows staying $\mathcal O(1)$ while the regularization rows scale as $\mathcal O(1/\gamma_k)$). This ill-conditions the linear systems solved at each step of an active-set method and can make it numerically difficult to take an accurate step, even though the subproblem itself remains strongly convex. This motivates our choice of a constant proximal parameter.
\end{remark}

\subsection{Accelerated Projected Gradient}
\label{sec:APG}
Problem~\eqref{eq:QP} can also be solved by the accelerated proximal gradient (APG) method~\cite{Nes83}
applied to the non-smooth convex problem~\eqref{eq:non-smooth}. Let, as before, $z=(x,\lambda,\nu)$ and $y=(\bar{x},\bar{\lambda},\bar{\nu})$ be the extrapolated variable, $z,y\in\rr^{n+m+q}$. 
A valid Lipschitz constant of the gradient of the smooth part of the objective in~\eqref{eq:non-smooth} is $L=2\|F_s\|_2$.
Given the constant stepsize $1/L$, the procedure is defined by the following iterations:
\begin{subequations}
\begin{align}
\nabla f_k&=[f+(F_s(\bar{x}^k))^\T\ b^\T\ e^\T]^\T\\
z^{k+1}&=\arg\min_z \|z-(y^k-\frac{1}{L} \nabla f_k)\|_2^2\\
  &\textrm{s.t.}\ z=(x,\lambda,\nu)\ \mbox{satisfies~\eqref{eq:QP_stat}--\eqref{eq:QP_lambda}}\label{eq:APG_prox}\\
\theta_{k+1}&=\frac{\theta_k\left(\sqrt{\theta_k^2+4}-\theta_k\right)}{2}\in(0,1]\label{eq:APG_theta}\\
y^{k+1}&=z^{k+1}+\theta_{k+1}\left(\frac{1}{\theta_k}-1\right)(z^{k+1}-z^k)
\end{align}\label{eq:APG}%
\end{subequations}
starting at $k=0$ from an initial point $z^0$ satisfying the constraints~\eqref{eq:QP_stat}--\eqref{eq:QP_lambda}, $\theta_0=1$, and $y^0=z^0$, with~\eqref{eq:APG_theta} being the solution of~\eqref{eq:guler_theta} for constant $\gamma_k$.
Therefore, as seen for proximal-point iterations, the method requires solving the strongly convex QP~\eqref{eq:APG_prox} at each iteration, possibly warm-starting the QP solver with the active-set obtained at the previous iteration. 
The violation of complementarity slackness, easily retrieved from~\eqref{eq:QP_obj}, can be used as a stopping criterion for the method.

Let $z^\star=(x^\star,\lambda^\star,\nu^\star)$ be a v-GNE of~\eqref{eq:QPi}, 
and, therefore, a solution of QP~\eqref{eq:QP} with zero optimal value $f^\star$.
As proved in~\cite[Theorem 4.4]{BT09}, the iterates generated by the APG method above satisfy
\[
  (\lambda^k)^\T(b-Ax^k)
  \leq
  \frac{4\|F_s\|_2\,\|z^1-z^\star\|^2}{(k+1)^2}
  =\mathcal{O}\!\left(\frac{1}{k^2}\right).
\]
Taking the relation above into account, we can use again Lemma~\ref{lem:eGNE} to formulate the following result. 
\begin{proposition}\label{prop:AccGD}
    The iterate $x^k$ of the accelerated projected gradient procedure in~\eqref{eq:APG} is an $\mathcal{O}\!\left(\frac{1}{k^2}\right)$-appGNE in the game defined by~\eqref{eq:QPi}.
\end{proposition}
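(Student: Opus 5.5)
The plan is to combine the standard FISTA objective-gap bound with the identity~\eqref{eq:equiv} and then apply Lemma~\ref{lem:eGNE}. Everything rests on one structural fact: every $z^{k}$ with $k\ge1$ is feasible for~\eqref{eq:QP}, because it is defined in~\eqref{eq:APG_prox} as the Euclidean projection onto the polyhedron described by~\eqref{eq:QP_stat}--\eqref{eq:QP_lambda}. The initial point $z^0$ is feasible by assumption. In particular $x^k\in\XX$, and the triple $(x^k,\lambda^k,\nu^k)$ is exactly the kind of feasible point to which Lemma~\ref{lem:eGNE} applies.

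First, I fix the setting of~\cite[Theorem 4.4]{BT09}. Write $\Psi=g+\II$ with smooth part $g(z)=x^\T F_s x+f^\T x+b^\T\lambda+e^\T\nu$. This $g$ is convex by Standing Assumption~\ref{assum:mon}, and its gradient is $L$-Lipschitz with $L=2\|F_s\|_2$, because $\nabla g(z)=(2F_sx+f,\,b,\,e)$; I will check that the gradient in the iteration matches this, including the factor $2$ or an equivalent rescaling. The proximal step for the indicator $\II$ is the projection~\eqref{eq:APG_prox}, and the $\theta_k$ recursion~\eqref{eq:APG_theta} with the extrapolation step is the FISTA momentum rule. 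Hence, whenever a minimizer $z^\star$ exists, the theorem gives $\Psi(z^k)-\Psi(z^\star)\le 2L\|z^0-z^\star\|^2/(k+1)^2$ (up to the paper's indexing convention $z^1$).

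Next, I assume a v-GNE exists, since otherwise the statement is vacuous. Lemma~\ref{lem:QP_reformulation} then says that the associated KKT triple $z^\star$ solves~\eqref{eq:QP} with optimal value $\Psi(z^\star)=0$. Since $z^k$ is feasible, $\Psi(z^k)=g(z^k)$, and by~\eqref{eq:equiv} we have $g(z^k)=(\lambda^k)^\T(b-Ax^k)\ge 0$. The bound therefore reads $g(z^k)\le \varepsilon_k:=4\|F_s\|_2\|z^0-z^\star\|^2/(k+1)^2=\mathcal O(1/k^2)$.

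Finally, Lemma~\ref{lem:eGNE} applied to $(\tx,\tl,\tn)=(x^k,\lambda^k,\nu^k)$ with $\varepsilon=\varepsilon_k$ shows that $x^k$ is an $\varepsilon_k$-appGNE, which is the claim. The only real obstacle is bookkeeping. I need to confirm that the iteration as written matches the hypotheses of~\cite{BT09}: the stepsize $1/L$, with $L$ a genuine Lipschitz constant of $\nabla g$ in the full variable $z$; projection onto the full feasible polyhedron; and the extrapolation coefficient $\theta_{k+1}(1/\theta_k-1)$ coinciding with $(t_k-1)/t_{k+1}$ for $t_k=1/\theta_k$. I will verify this last identity by squaring~\eqref{eq:APG_theta}, which gives $\theta_{k+1}^2=(1-\theta_{k+1})\theta_k^2$, i.e.\ $t_{k+1}^2-t_{k+1}=t_k^2$. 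If $F_s=0$, then $L=0$ and the stepsize is undefined, but then $g$ is linear and any $L>0$ works, giving the same order.
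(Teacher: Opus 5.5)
Your proposal is correct and follows the paper's argument. You apply the bound of \cite[Theorem 4.4]{BT09} with $L=2\|F_s\|_2$ and optimal value $0$ (Lemma~\ref{lem:QP_reformulation}), use feasibility of the projected iterates together with~\eqref{eq:equiv} to read the objective gap as $(\lambda^k)^\T(b-Ax^k)$, and conclude with Lemma~\ref{lem:eGNE}. Your bookkeeping checks are worthwhile. The gradient of the smooth part is $(2F_s\bar x^k+f,\,b,\,e)$, and the displayed iteration must use this factor $2$ to match both $L=2\|F_s\|_2$ and the QP objective. Also, with the paper's indexing ($y^0=z^0$), the constant in the bound involves $\|z^0-z^\star\|$.
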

Note that restarting the extrapolation coefficients $\theta_k$ adaptively, as suggested in~\cite{OC15}, may further improve the convergence of the method in practice.

\subsection{Comparison of Convergence Rates}
We close this section by comparing the guarantees of Propositions~\ref{prop:AccPP}--\ref{prop:AccGD} for the proposed QP-based methods with the guarantee available for the extragradient method~\eqref{eq:extragradient}.

Unlike the accelerated proximal-point and accelerated projected-gradient methods, which by Propositions~\ref{prop:AccPP}--\ref{prop:AccGD} produce a \emph{last-iterate} $\mathcal O(1/k^2)$-appGNE under mere monotonicity of $\phi$ alone, a comparable last-iterate guarantee is not available for the extragradient method~\eqref{eq:extragradient} in this setting. A rate is only known for the \emph{ergodic average} of its iterates, and it additionally requires the shared constraint set $\XX$ to be compact~\cite[Thm.~3.2 and Sec.~5]{Nemirovski04}. To state it, let us define the restricted Minty gap over the compact set $\XX$,
\[m(x) = \sup_{z\in \XX} \phi(z)^\T(x-z)
\]
which, in contrast with the gap function $\operatorname{Gap}(\cdot)$ in~\eqref{eq:Gap}, evaluates the pseudogradient $\phi$ at the candidate point $z$ rather than at $x$; by monotonicity of $\phi$ (Standing Assumption~\ref{assum:mon}), $m(x)\le \operatorname{Gap}(x)$ for every $x\in\XX$.

Let $\bar y^K = \frac{1}{K} \sum_{k=0}^{K-1} y^k$ denote the ergodic average of the extrapolated iterates $y^k$ in~\eqref{eq:extragradient}. Then, by~\cite[Thm.~3.2]{Nemirovski04}, the extragradient method with a constant stepsize $\alpha \le 1/(\sqrt{2}L)$, where $L=\|F\|_2$ is the Lipschitz constant of $\phi$, satisfies
\[m(\bar y^K)
=
\sup_{z\in \XX}\phi(z)^\T(\bar y^K-z)
\le
\frac{1}{2\alpha K}\sup_{z\in \XX}\|x^0-z\|^2
\]
so that $m(\bar y^K)=\mathcal O\!\left(\frac{1}{K}\right)$. Since $m(\cdot)$ only bounds the weaker Minty gap rather than the gap function $\operatorname{Gap}(\cdot)$ used in Lemma~\ref{lem:eGNE} to certify an appGNE, translating this bound into an actual suboptimality guarantee for $\bar y^K$ incurs the loss of a square root: $\bar y^K$ is only guaranteed to be an $\mathcal O\!\left(\frac{1}{\sqrt K}\right)$-appGNE of~\eqref{eq:QPi}, see~\cite{TatNed25}.

Thus, in view of Propositions~\ref{prop:AccPP}--\ref{prop:AccGD}, the accelerated proximal-point and accelerated projected-gradient methods yield a strictly more accurate approximation of a v-GNE than the extragradient method for a given number of iterations ($\mathcal O(1/k^2)$ versus $\mathcal O(1/\sqrt k)$). Moreover, they provide this guarantee for the last iterate rather than for an ergodic average, and they do not require the joint action set $\XX$ to be compact.

\section{Special Classes of Linear-Quadratic GNEPs}
In this section, we consider some special cases of the game-theoretic problem defined in~\eqref{eq:QPi}. 

\subsection{Invertible $F$ and Variable Elimination}
Although good QP solvers can handle equality constraints efficiently,
we now show that, under additional assumptions, it is possible to eliminate $x$ and $\nu$ from the problem in advance and get a QP which is dependent merely on $\lambda$. In this way, we can reduce the number of variables to $m$, at the price of potentially destroying sparsity structures that may exist in the original problem~\eqref{eq:QPi}.  

\begin{lemma}
Let $F+F^\T\succeq 0$, $F$ invertible, and, when $q>0$, assume $\operatorname{rank}(E) = q$ and $M = E F^{-1} E^\T$ invertible. Then, the v-GNE problem~\eqref{eq:QPi} can be reformulated as the following convex QP in $\lambda$:
\begin{equation}
    \begin{aligned}
    \min_{\lambda}\ & \lambda^\T H_s\lambda + q_0^\T\lambda\\
    \textrm{s.t.}\ & H\lambda + q_0\geq 0,\quad \lambda\geq 0
    \end{aligned}
    \label{eq:QP_dual}
\end{equation}
where $H_s=(H+H^\T)/2$ and $H$ and $q_0$ are defined as:
\[
\begin{aligned}
H&=APA^\T,\quad P=G-GE^\T M^{-1}EG,\quad G=F^{-1}\\
q_0&=b-Ax_0,\quad x_0=-Pf+GE^\T M^{-1}e.
\end{aligned}
\]
The v-GNE $x^\star$ of~\eqref{eq:QPi} and the associated dual vector $\nu^\star$ 
can be recovered from the optimal solution $\lambda^\star$ of~\eqref{eq:QP_dual} as
\[
x^\star = x_0-PA^\T\lambda^\star,\quad \nu^\star = 
-M^{-1}(EG(f+A^\T\lambda^\star)+e).
\]
\label{lem:QP_dual}
\end{lemma}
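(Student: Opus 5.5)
Starting from the joint KKT system~\eqref{eq:KKT}, I will eliminate $x$ and $\nu$ explicitly, for fixed $\lambda$, using the equality part of the system. Then I will show that what remains, namely primal feasibility, dual feasibility and complementarity in $\lambda$, is the QP~\eqref{eq:QP_dual} with zero optimal value. This mirrors the argument of Lemma~\ref{lem:QP_reformulation}.

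\textbf{Elimination.} Fix $\lambda$ and solve the linear system formed by~\eqref{eq:KKT_stat} and~\eqref{eq:KKT_eq} for $(x,\nu)$.
\begin{itemize}
\item From~\eqref{eq:KKT_stat} and invertibility of $F$, $x=-G(f+A^\T\lambda+E^\T\nu)$.
\item Substituting into $Ex=e$ gives $M\nu=-EG(f+A^\T\lambda)-e$. Since $M$ is invertible, this yields the stated formula for $\nu^\star$.
\item Substituting back into the expression for $x$ gives $x=-Pf-PA^\T\lambda+GE^\T M^{-1}e=x_0-PA^\T\lambda$.
\end{itemize}
This is a routine computation, and it shows that $(x,\nu)$ is an affine function of $\lambda$, uniquely determined. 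When $q=0$, simply set $P=G$ and drop $\nu$.

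\textbf{Reduced conditions.} With this substitution, $b-Ax=b-Ax_0+APA^\T\lambda=H\lambda+q_0$. Hence~\eqref{eq:KKT_prim}--\eqref{eq:KKT_comp} become the linear complementarity problem
\[
\lambda\ge 0,\qquad H\lambda+q_0\ge 0,\qquad \lambda^\T(H\lambda+q_0)=0 .
\]
Because $\lambda^\T(H\lambda+q_0)=\lambda^\T H_s\lambda+q_0^\T\lambda$, this is exactly ``feasible for~\eqref{eq:QP_dual} with zero objective''. The objective is nonnegative on the feasible set, since it is the product of two nonnegative vectors. So, exactly as in Lemma~\ref{lem:QP_reformulation}, v-GNEs correspond one-to-one with minimizers of~\eqref{eq:QP_dual} having zero value, whenever a v-GNE exists. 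Recovery of $x^\star$ and $\nu^\star$ then follows from the elimination formulas.

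\textbf{Convexity (main obstacle).} It remains to show $H_s\succeq 0$, i.e.\ $P+P^\T\succeq 0$; I expect this to be the nontrivial step. I would first use that $F+F^\T\succeq0$ implies $G+G^\T=G(F+F^\T)G^\T\succeq0$, i.e.\ $w^\T Gw\ge0$ for all $w$.

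For a vector $v$, define $u=Pv$. By construction, $u=Gw$ with $w=v-E^\T M^{-1}EGv$, and $Eu=0$. Then
\[
v^\T Pv=v^\T u=(w+E^\T\mu)^\T u=w^\T u=w^\T Gw\ge 0 ,
\]
where the cross term vanishes because $Eu=0$. Hence $P$ is monotone, so $H_s=A P_s A^\T\succeq0$, where $P_s=(P+P^\T)/2$.

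An alternative route, if the above bookkeeping becomes messy, is to view $P$ as the Schur complement of the monotone saddle matrix $\begin{bmatrix}F&E^\T\\ -E&0\end{bmatrix}$, whose inverse has $P$ as its leading block.
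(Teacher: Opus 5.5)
Your proposal is correct and follows essentially the same route as the paper's appendix proof: the same elimination of $(x,\nu)$, the same identification of the objective with the complementarity gap $\lambda^\T(H\lambda+q_0)$, and the same convexity mechanism. Your $w=v-E^\T M^{-1}EGv$ is exactly $Qv$ for the paper's $Q=I-E^\T M^{-1}EG$, so your identity $v^\T Pv=(Qv)^\T G(Qv)$ is the quadratic-form version of the paper's $Q^\T GQ=P$; deriving it through $EP=0$ is slightly cleaner than the paper's term-by-term cancellation, and your explicit LCP argument (nonnegative objective, zero value iff KKT holds) spells out the reformulation claim a bit more completely than the appendix does.
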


\begin{proof}
See Appendix~\ref{app:QP_dual}.
\end{proof}

\begin{remark}
Assuming that $\operatorname{rank}(E) = q$ is with no loss of generality. In fact, in the case $E$ has linearly dependent rows, one can perform a QR decomposition with pivoting $\Pi [E\ -e] = Q \left[\begin{smallmatrix} R_{11} & R_{12} \\ 0 & R_{22} \end{smallmatrix}\right]$. If $R_{22}\neq 0$, the game has no solution as $\{x:\ Ex=e\}=\emptyset$. Otherwise, we replace
the equality constraints by $[R_{11}\ R_{12}]\left[\begin{smallmatrix} x' \\ 1 \end{smallmatrix}\right]=0$.
\end{remark}

The following lemma shows that strict monotonicity is a sufficient condition for the assumptions of the previous lemma to hold.

\begin{lemma}
Let the GNEP~\eqref{eq:QPi} be strictly monotone, i.e., $F+F^\T \succ 0$, and $E$ be full row-rank. Then, $F^{-1}$ exists and $M = E F^{-1} E^\T$ is invertible.
\end{lemma}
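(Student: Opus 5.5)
The plan is to use the quadratic form $v^\T F v = v^\T F_s v$ and the fact that $F+F^\T\succ 0$ makes this form positive for every nonzero $v$. Both claims then come from the same kernel argument.

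First, invertibility of $F$. Suppose $Fv=0$ for some $v\in\rr^n$. Then $0 = v^\T F v = \tfrac12 v^\T (F+F^\T) v$. Since $F+F^\T\succ 0$, this forces $v=0$. So $F$ is nonsingular and $G=F^{-1}$ exists.

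Second, I will show that $G=F^{-1}$ also has positive definite symmetric part. For any $w\neq 0$, set $v=Gw$; then $v\neq 0$ and $w=Fv$. This gives
\[
w^\T G w = (Fv)^\T v = v^\T F^\T v = v^\T F v = \tfrac12 v^\T(F+F^\T)v > 0 .
\]
Hence $G+G^\T\succ 0$.

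Third, invertibility of $M=EGE^\T$. Take $u\in\rr^q$ with $Mu=0$. Then
\[
0 = u^\T M u = (E^\T u)^\T G (E^\T u) = \tfrac12 (E^\T u)^\T (G+G^\T)(E^\T u).
\]
By the second step this gives $E^\T u=0$. Since $E$ has full row rank $q$, $E^\T$ is injective, so $u=0$ and $M$ is invertible. The case $q=0$ is vacuous.

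None of these steps is a real obstacle. The only point needing care is the second one: positive definiteness of the symmetric part must pass from $F$ to $F^{-1}$ even though $F$ is not symmetric. The substitution $v=Gw$ handles this, using that a scalar equals its own transpose.
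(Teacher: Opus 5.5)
Your proof is correct and follows the same route as the paper. You both use the kernel argument $0=v^\T F v=v^\T F_s v$ for invertibility of $F$, and then reduce the quadratic form of $M$ at $u$ to that of $F$ at $F^{-1}E^\T u$; the paper does this reduction inline, whereas you isolate the positive definiteness of $F^{-1}+F^{-\T}$ as a separate step and correctly use $E^\T u$ where the paper's proof writes $y=Ex$ (meaning $y=E^\T x$).
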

\begin{proof}
Suppose by contradiction that $F$ is not full rank, i.e., $F x = 0$ for some $x \neq 0$.
Then $0=x^\T F x = x^\T F_s x$, contradicting $F_s \succ 0$, and hence $F^{-1}$ must exist.
Consider any $x\neq 0$ and let $y=Ex$, where $y\neq 0$ as $E$ is full row-rank. 
Then, $x^\T Mx=y^\T F^{-1} y = (F^{-1}y)^\T F (F^{-1}y) = (F^{-1}y)^\T F_s (F^{-1}y) > 0$,
for all $x\neq 0$, and hence $M$ positive definite and therefore invertible.
\end{proof}

We remark that strict monotonicity is not necessary for the assumptions of Lemma~\ref{lem:QP_dual} to hold. For example, consider a two-player game with $n_1=n_2=1$, $Q^{(1)} = \left[\begin{smallmatrix} 1 & 0 \\ 0 & 0 \end{smallmatrix}\right]$, $Q^{(2)} = \left[\begin{smallmatrix} 4 & 2 \\ 2 & 1 \end{smallmatrix}\right]$, $c^{(1)}=c^{(2)}=0$, and no constraints. The pseudogradient matrix $F=\left[\begin{smallmatrix} 1& 0\\2&1 \end{smallmatrix}\right]$ is invertible, but its symmetric part $F_s = \left[\begin{smallmatrix} 1 & 1 \\ 1 & 1 \end{smallmatrix}\right]$ has eigenvalues $0$ and $2$.

Regarding the computations involved in constructing the reduced QP~\eqref{eq:QP_dual}, a numerically efficient method is to first find an LU decomposition with pivoting $\Pi F=LU$ of $F$, $F\in\rr^{n\times n}$, then solve $FY=E^\T$ for $Y$ by forward and backward substitution and compute $M=E Y$. 
Then, one solves $F^{\T}V=E^\T$ and sets $Z=V^\T$ using the same LU decomposition.
By computing a second LU decomposition with pivoting $\Pi' M = L_MU_M$, $M\in\rr^{q\times q}$,
we solve $MC=Z$ for $C$ by forward and backward substitution. Finally, we compute $Q=I-E^\T C$ and solve $FP=Q$ for $P$. In the case of strictly monotone games, as $F$ is positive definite, LU decompositions can be replaced by Cholesky factorizations.

The proximal-point iterations described in Section~\ref{sec:proximal-point} can be applied to the reduced QP~\eqref{eq:QP_dual} as well, leading to a sequence of strongly convex QPs in $\lambda$ alone. In this case, 
the suboptimality parameter $\varepsilon$ is immediately available from the optimal cost of the QP as
$\varepsilon^{k+1} = (\lambda^{k+1})^\T (H_s\lambda^{k+1} + q_0)-\frac{\rho}{2}\|\lambda^{k+1}-\lambda^{k}\|_2^2$
and can be used as a stopping criterion for the method.

\section{Game-Theoretic MPC}
\label{sec:mpc}
Game-theoretic linear MPC problems with $N$ agents can be cast as LQ-GNEPs with shared constraint (see, e.g.,~\cite[Section 5.2]{Bem26}).
Let us assume that each agent decides a subset $u_i\in\rr^{n_{u_i}}$ of the input vector $u=[u_1^\T,\ldots,u_N^\T]^\T$, $u\in\rr^{n_u}$, $n_u=\sum_{i=1}^N n_{u_i}$, and makes predictions over a finite horizon $T$ according to the following linear time-invariant dynamics:
\begin{subequations}
\label{eq:mpc_dyn}
  \begin{align}
  x^i_{k+1} &= {\mathcal A}^ix^i_k + {\mathcal B}^iu_k \label{eq:mpc_dyn_state}\\
    y^i_k &= {\mathcal C}^ix_k 
  \end{align}
\end{subequations}
where $x^i_k\in\rr^{n_x}$ is the state vector used by the agent to make an agent-specific prediction $y^i_k$ of the system
output $y_k$, $y_k\in\rr^{n_y}$, and ${\mathcal B}^i=[{\mathcal B}^i_1,\ldots,{\mathcal B}^i_N]$. In general, each agent can have a different model $({\mathcal A}^i,{\mathcal B}^i,{\mathcal C}^i)$ of the system dynamics
\begin{equation}
\label{eq:mpc_dyn-process}
  x(t{+}1) = {\mathcal A}x(t) + {\mathcal B}u(t), \quad y(t) = {\mathcal C}x(t)
\end{equation}
that generates the data, where ${\mathcal A}\in\rr^{n_x\times n_x}$, ${\mathcal B}\in\rr^{n_x\times n_u}$, and ${\mathcal C}\in\rr^{n_y\times n_x}$ are the true system matrices, or a common model of~\eqref{eq:mpc_dyn-process}, depending on the application. The state $x(t)$ of the system can be either assumed fully observed by all agents at each time step $t$, or provided to each agent $i$ by a centralized estimator, e.g., a Kalman filter, that fuses the measurements of all agents, or estimated by each agent $i$ by using its own model and local, plus possibly shared, measurements. 

Consider the following extension of the linear GT-MPC setup in~\cite{Bem26}, where each agent minimizes a finite-horizon cost over its own input trajectory, subject to input and output constraints. By letting $\Delta u(t)=u(t)-u(t{-}1)$ denote the input increments, 
each agent~$i$ minimizes over
$(\Delta u^i_0,\ldots,\Delta u^i_{T{-}1},\varepsilon_i)$ the finite-horizon cost
\begin{align}\label{eq:mpc_cost}
  J_i = \sum_{k=0}^{T-1}&\bigl[(y^i_{k+1}-r^i(t))^\T Q_y^{(i)}(y^i_{k+1}-r^i(t))\notag\\
  &{}+ (\Delta u^i_k)^\T Q_{\Delta u}^{(i)}\,\Delta u^i_k\bigr]
  + Q_{\varepsilon_1}\varepsilon_i + Q_{\varepsilon_2}\varepsilon_i^2
\end{align}
where $r^i(t)\in\rr^{n_y}$ is the current output set-point agent $i$ wishes the output $y(t)$ of the system to track, subject to the dynamics~\eqref{eq:mpc_dyn}.
In~\eqref{eq:mpc_cost}, $\epsilon_i$ is a per-agent scalar slack variable used to
soften the output constraints
\begin{equation}\label{eq:mpc_output_constraints}
  y_{\min}^i - \textstyle\sum_{j=1}^N\epsilon_j\,\mathbf{1} \le y_{k+1}^i \leq
  y_{\max}^i + \textstyle\sum_{j=1}^N\epsilon_j\,\mathbf{1}
\end{equation}
imposed by each agent on the predicted output vector $y^i_{k+1}$, where $y_{\min}^i,y_{\max}^i$ can be either common or agent-specific. Local input constraints
\begin{equation}\label{eq:mpc_local}
\begin{aligned}
  &u_{\min}^i \leq u^i_k \leq u_{\max}^i,&\epsilon_i\geq 0\\
  &\Delta u_{\min}^i \leq \Delta u^i_k \leq \Delta u_{\max}^i
\end{aligned}
\end{equation}
are also imposed by each agent for $k=0,\ldots,T{-}1$. 

Each agent's prediction is initialized at the corresponding initial state $x^i_0$. The set-points $r^i(t)$ can be either conflicting or coincident; in the special case where each agent $i$ only weights a subset
$y_i$ of the output vector, matrix $Q_y^{(i)}$ has nonzero entries only in the rows and columns corresponding to $y_i$, so we can assume coincident set-points $r^i(t)=r(t)$ for all $i$ when the subsets $y_i$ are disjoint. 

By treating $(\Delta u^i_0,x^i_1,\ldots,\Delta u^i_{T{-}1},x^i_T,\epsilon_i)$ as the decision vector of agent $i$, the problem above can be cast as an LQ-GNEP with linear equality and inequality constraints of the form~\eqref{eq:QPi}. Solving such a QP clearly requires aggregating the information about the initial states, the system dynamics, the cost matrices, and the constraints of all agents. 

Note that the inequality constraints due to bounds on inputs and outputs are {\it local},
as they only involve $u^i_k$ and $x^i_{k+1}$, respectively, while
the dynamic constraints~\eqref{eq:mpc_dyn_state} are {\it shared} by all agents, as, in general, each predicted state $x^i_{k+1}$ depends on the entire input vector $u_k$. The variational nature of the game imposes that the dual variables associated with the dynamic constraints are shared by all agents.

The QP reformulation~\eqref{eq:QP} is very sparse, due to the sparsity of each agent's cost and  constraints with respect to the entire decision vector of the game. A {\it condensing} of the MPC problem can be obtained by eliminating the state variables $x^i_k$ from the problem, at the price of sacrificing much of the sparsity of the problem. While this eliminates the shared equality constraints~\eqref{eq:mpc_dyn_state}, the resulting problem is still an LQ-GNEP with shared inequality constraints due to~\eqref{eq:mpc_output_constraints}, and can be solved by the same methods described in this paper.

\section{Numerical Results}
We compare the performance of the methods described in Section~\ref{sec:solution_methods} for solving $N$-player LQ-GNEPs on randomly generated instances and in a game-theoretic model predictive control (MPC) setting. 
We exclude two further classes of methods from this comparison: MILP-based methods that solve the KKT system~\eqref{eq:KKT} directly, as described in~\cite{Bem26}, since they are mainly designed for {\it non-variational} and possibly non-monotone LQ-GNEPs; and KKT-residual methods, since they rely on nonconvex optimization and would be inefficient for the class of GNEPs considered here.

All numerical tests are run in Python 3.11 on a MacBook Pro with Apple M4 Max (16 CPU cores), using the DAQP solver~\cite{ABA22b} for solving strongly convex QP problems
with warm start of the active set. We also use the QP solver Clarabel~\cite{GC24} to solve the QP reformulation~\eqref{eq:QP} with singular Hessian matrix. The code for reproducing the numerical results is available at \url{https://github.com/bemporad/QP-LQGNE}.

\subsection{Randomly Generated Merely-Monotone GNEPs}
% example_lq_vgne_comparison.py
We consider randomly generated instances of $N$-player merely-monotone LQ-GNEPs in which each player $i$ owns $\bar n$ decision variables $x_i \in \rr^{\bar n}$, i.e., the joint vector $x = (x_1,\ldots,x_N) \in \rr^{N\bar n}$. Each matrix $Q_i \in \rr^{N\bar n \times N\bar n}$ is symmetric and positive semidefinite, so every player's cost depends on the full joint vector. Each instance is constructed to be merely monotone,
i.e., with zero minimum eigenvalue of $(F+F^\T)/2$ and $F$ invertible.
We consider $m$ shared inequality constraints and $q$ shared equality constraints, where the matrices $A$ and $E$ are generated as random sparse matrices with density 0.1. The right-hand sides of the constraints and linear terms of the costs are generated so that the problem is feasible and has $\lceil N/4\rceil$ active inequality constraints at the solution. 

\begin{table}[t]
  \setlength{\tabcolsep}{7pt}
  \renewcommand{\arraystretch}{1.} % Adjust row separation
  \centering
{\scriptsize
 \begin{tabular}{rrr|r|r|r|r|r|r|r}
  \hline
  $N$ & $m$ & $q$ & \multicolumn{1}{c|}{EG} & \multicolumn{1}{c|}{EG-aug} & \multicolumn{1}{c|}{APG-QP} & \multicolumn{1}{c|}{C} & \multicolumn{1}{c|}{C-r} & \multicolumn{1}{c|}{D-p} & \multicolumn{1}{c}{D-r-p} \\\hline
  %  &  &  & ms & ms & ms & ms & ms & ms & ms \\
  \hline
  2 & 1 & 0 & 7.08 & 0.04 & 0.07 & 0.09 & 0.06 & 0.04 & \textbf{0.02} \\
  2 & 4 & 0 & 4.92 & 0.16 & 0.13 & 0.10 & 0.07 & 0.04 & \textbf{0.02} \\
  5 & 2 & 0 & 3.31 & \textbf{0.01} & 0.09 & 0.14 & 0.06 & 0.04 & 0.02 \\
  5 & 10 & 0 & 3.08 & 0.56 & 0.27 & 0.21 & 0.11 & 0.05 & \textbf{0.02} \\
  5 & 10 & 1 & 2.56 & 1.18 & 0.31 & 0.20 & 0.11 & 0.05 & \textbf{0.02} \\
  10 & 5 & 0 & 2.97 & 0.65 & 0.21 & 0.35 & 0.07 & 0.05 & \textbf{0.02} \\
  10 & 20 & 0 & 3.03 & 2.44 & 0.66 & 0.59 & 0.27 & 0.08 & \textbf{0.04} \\
  10 & 20 & 3 & 2.73 & 5.84 & 0.95 & 0.59 & 0.27 & 0.09 & \textbf{0.04} \\
  20 & 10 & 0 & 3.12 & 4.78 & 0.67 & 1.38 & 0.12 & 0.09 & \textbf{0.03} \\
  20 & 40 & 0 & 5.05 & 18.83 & 3.14 & 2.75 & 1.02 & 0.29 & \textbf{0.09} \\
  20 & 40 & 6 & 5.01 & 38.78 & 4.86 & 2.70 & 1.02 & 0.35 & \textbf{0.10} \\
  50 & 25 & 0 & 15.96 & 34.37 & 11.66 & 16.16 & 0.46 & 0.77 & \textbf{0.08} \\
  50 & 100 & 0 & 51.26 & 92.22 & 80.68 & 36.93 & 10.29 & 4.36 & \textbf{1.02} \\
  50 & 100 & 16 & 86.93 & 222.50 & 196.67 & 82.77 & 12.08 & 5.49 & \textbf{1.68} \\
  100 & 50 & 0 & 71.78 & 89.61 & 135.20 & 112.28 & 1.92 & 6.09 & \textbf{0.87} \\
  100 & 200 & 0 & 220.23 & 295.05 & 843.63 & 197.68 & 72.78 & 33.63 & \textbf{6.38} \\
  100 & 200 & 33 & 393.50 & 665.48 & 1830.71 & 346.21 & 132.35 & 45.34 & \textbf{10.41} \\
  \hline
  \end{tabular}
}
\caption{
Mean CPU time (ms)
over 100 random instances. EG = extragradient method with projection via QP, EG-aug = outer-approximated extragradient method, 
C=Clarabel, D = DAQP, r = reduced QP~\eqref{eq:QP_dual}, p = proximal point iterations, APG-QP = accelerated proximal gradient. 
For EG, the gap is computed as in~\eqref{eq:Gap}, while for the other methods, it is $\lambda^\T(b-Ax)$. 
The same tolerance is used in EG-aug to ensure feasibility of $x$ when the algorithm stops. 
}
\label{tab:random-lqgne}
\end{table}

Table~\ref{tab:random-lqgne} reports the mean CPU time and average number of iterations over random instances of v-GNE LQ games with up to $N=100$ agents, $\bar n=2$ variables per agent ($n = N\bar n$), and different numbers $m$ of coupling inequality and $q$ of equality constraints. We collect 100 instances for each configuration as follows. We set tentative agents' QP Hessian matrices
$\tilde Q^{(i)} = B_i^\T B_i$, where each entry of $B_i$ is sampled i.i.d.\ from the standard normal distribution $\mathcal{N}(0,1)$, and compute the resulting pseudogradient matrix $F$. If $\lambda_{\rm min}(F)\leq 0$, we set $Q^{(i)}=\tilde Q^{(i)} -\lambda_{\rm min}(F)I$ to make the game merely monotone and add the instance, otherwise discard and repeat.
Each entry of the linear terms $c^{(i)}$ is sampled from $\mathcal{N}(0,5^2)$, 
and the coupling constraints are generated as follows: each entry of $A$ and $E$ is sampled from $\mathcal{N}(0,1)$, a feasible vector $x_0$ is sampled uniformly in $[-5,5]^n$, then we set $f=Ex_0$ and $b=Ax_0+\tilde b$, where each entry of $\tilde b$ is sampled uniformly in $[0.1, 0.5]$.

We consider the extragradient method (\texttt{EG})~\eqref{eq:extragradient} for solving merely-monotone variational inequalities as a baseline for comparison, with projection onto the constraint set $\XX$ performed by solving a 
strongly convex QP with DAQP, exploiting warm-starting of the active set from the previous projection
for numerical efficiency. We use the gap function~\eqref{eq:Gap}, computed by solving an additional linear program, as a stopping criterion.
Moreover, we consider the following solution methods for the QP reformulation
of the game: Clarabel for solving the QP~\eqref{eq:QP} (\texttt{C}) or the reduced QP~\eqref{eq:QP_dual}
(\texttt{C-r}), 
proximal point iterations and DAQP to solve \eqref{eq:QP} (\texttt{D-p}) 
or~\eqref{eq:QP_dual} (\texttt{D-r-p}), and the accelerated proximal gradient method 
with restart applied to the original QP~\eqref{eq:QP} (\texttt{APG-QP}) described in Section~\ref{sec:APG}.

The results show that \texttt{D-r-p} outperforms all other methods by a significant margin, especially as the size
of the merely monotone v-GNEP increases.

\subsection{Game-Theoretic MPC}

We consider an example of game-theoretic linear MPC with $N=2$ agents operating on the
discrete-time LTI plant~\eqref{eq:mpc_dyn}, where $x(t)\in\rr^{4}$ and $u(t),y(t)\in\rr^{4}$. Each agent $i$ controls two components of the input vector, denoted as $u_i(t)\in\rr^2$, and aims to regulate the output $y$ to a common set-point $r$, each using different output weight matrices, while satisfying local input constraints and shared output constraints.

The matrices ${\mathcal A}$, ${\mathcal B}$, ${\mathcal C}$ are generated randomly in $\NN(0,1)$ and, after generating
the matrices, the spectral radius of ${\mathcal A}$ is scaled to~$0.95$ and ${\mathcal C}$ is normalized so that the 
DC gain equals the identity. We use diagonal weights $Q_y^{(i)}$ with diagonal entries equal to $1$ except for those corresponding to $y_i$,  which are set to $2$, $Q_{\Delta u}^{(i)}=0.05I$,
$Q_{\epsilon_1}=Q_{\epsilon_2}=10^{3}$; 
each entry of $u_{\min}=-3$, $u_{\max}=3$, $\Delta u_{\min}=-\infty$, $\Delta u_{\max}=\infty$,
and we consider both the presence and the absence of output constraints
$y_{\min}=0$, $y_{\max}=2$ (output constraints become shared inequality constraints in~\eqref{eq:QPi}).

We adopt the linear GT-MPC setup described in Section~\ref{sec:mpc}
for different prediction horizons $T\in\{5, 10, 15, 20, 30\}$. The resulting game is verified numerically to be strongly monotone for each value of~$T$. For this reason, we also compare the performance of the methods described in Section~\ref{sec:solution_methods} with the solver proposed in~\cite{ABB26} 
for strongly monotone LQ-GNEPs (\texttt{dr\_daqp}), that we apply either directly on
the resulting GNE or on its condensed version obtained by eliminating the state variables
via SVD decomposition of the equality constraint matrix $E$. The proximal point iterations D-p and D-r-p are applied 
with $\frac{1}{\gamma_k}\equiv \rho=10^{-6}$.

\begin{figure}[ht]
\centering
\begin{subfigure}{.8\columnwidth}
\centering
\includegraphics[width=.8\columnwidth]{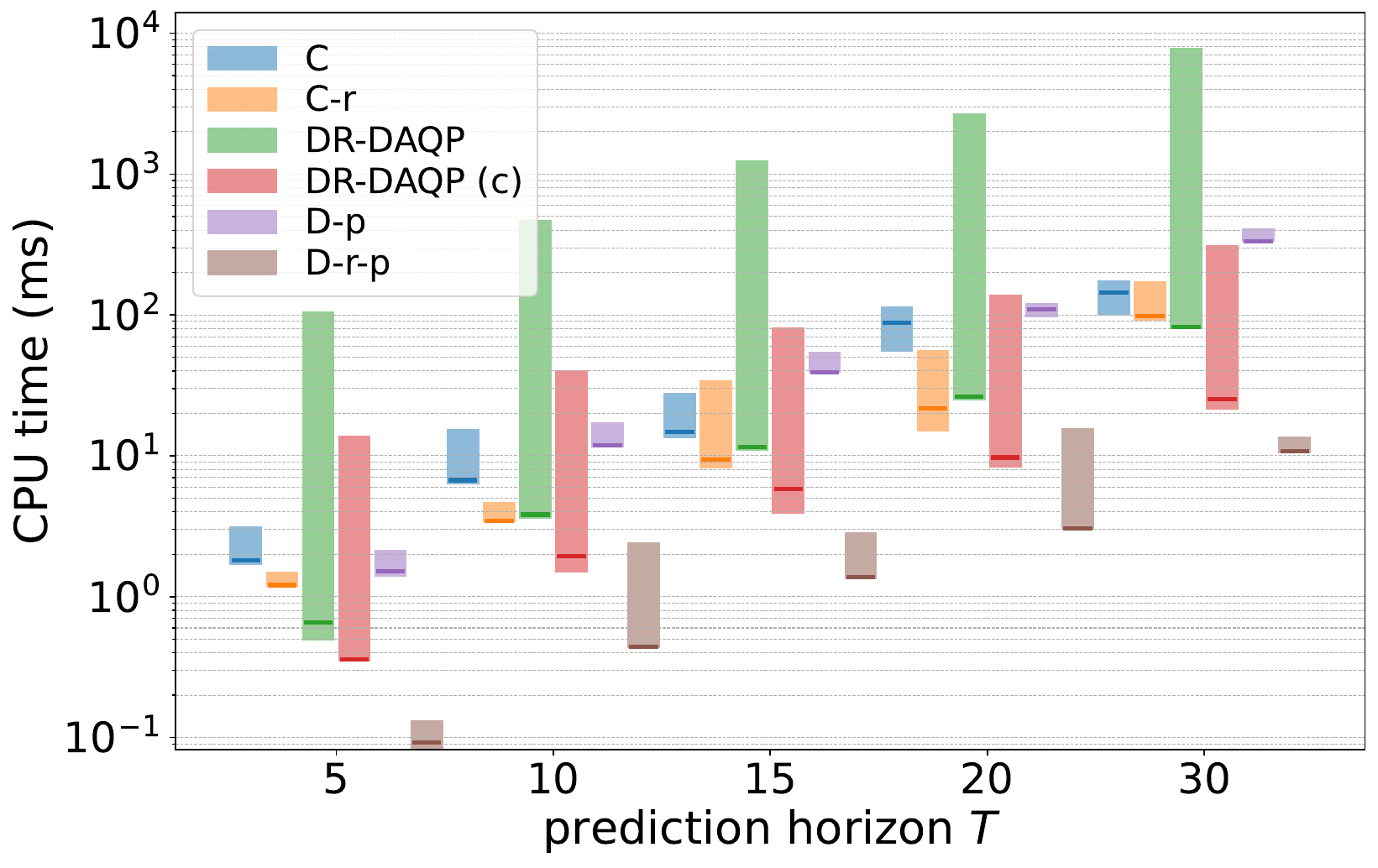}
\caption{Input constraints only.}
\label{fig:mpc_timing_a}
\end{subfigure}
\begin{subfigure}{.8\columnwidth}
\centering
\includegraphics[width=.8\columnwidth]{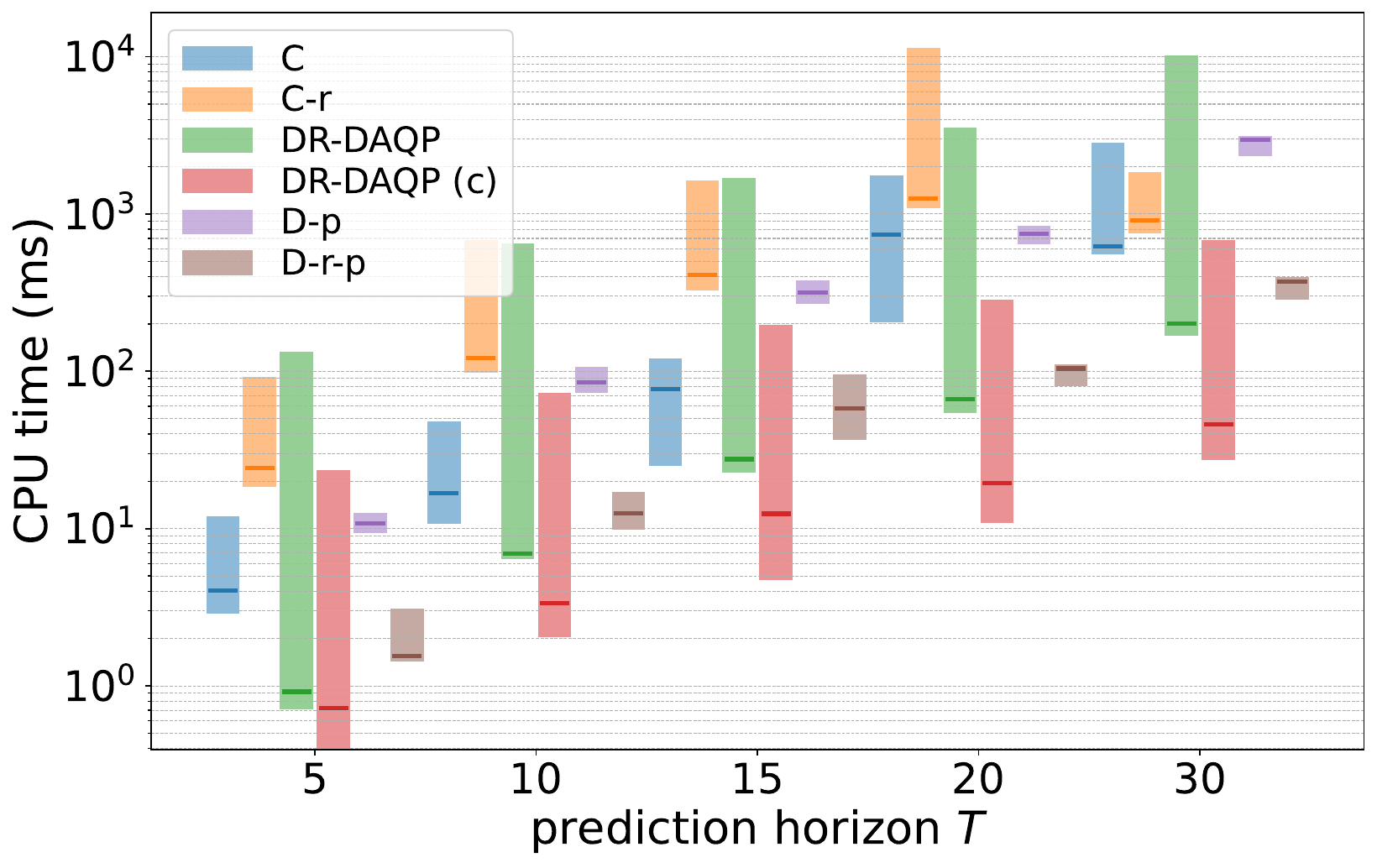}
\caption{Input and output constraints.}
\label{fig:mpc_timing_b}
\end{subfigure}
\caption{CPU time per closed-loop step for GT-MPC ($N=2$ agents).
Bars show the min/max range, horizontal marks indicate the median.}
\label{fig:mpc_timing}
\end{figure}

Figure~\ref{fig:mpc_timing} reports the minimum, median, and maximum CPU time per
closed-loop step over the simulation.
The results show that for input-constrained problems, D-r-p outperforms all the other methods. In the presence of output constraints, DR-DAQP has a better median CPU time, although D-r-p has a better worst-case one, which is more relevant for real-time applications.

\section{Conclusions}
We showed that computing a v-GNE of an LQ-GNEP with shared affine constraints, under mere monotonicity, reduces to solving a single convex QP from the players' joint KKT conditions, with zero optimal value iff a v-GNE exists. From this, we derived a regularized variant with an explicit suboptimality bound and two accelerated methods (proximal-point, projected-gradient) with $\mathcal{O}(1/k^2)$ last-iterate guarantees, versus $\mathcal{O}(1/\sqrt k)$ for extragradient, plus a reduced dual-only QP for invertible pseudogradients. Numerical results, including a game-theoretic MPC case study, confirm substantial gains over existing solvers as problem size grows. 
Future work includes extending to non-monotone LQ-GNEPs and studying inexact QP subproblem solutions for real-time MPC.

\appendix

\section{Appendix}

\subsection{Proof of Lemma~\ref{lem:QP_dual}}
\label{app:QP_dual}
Since $F$ is invertible, from~\eqref{eq:QP_stat} we get
$x= -G(f + A^\T \lambda + E^\T \nu)$. By substituting this expression in~\eqref{eq:QP_eq}
we get $-EG(f + A^\T \lambda + E^\T \nu)=e$, i.e., 
% -EF^{-1}E^\T\nu - EF^{-1}(f + A^\T \lambda) - e = 0$, i.e., 
%  EF^{-1}E^\T\nu + EF^{-1}(f + A^\T \lambda) + e = 0$, i.e., 
%  M\nu = -EF^{-1}(f + A^\T \lambda) - e = 0$, i.e., 
%  \nu = -M^{-1}EF^{-1}(f + A^\T \lambda) - M^{-1}e = 0$, i.e., 
$\nu = -M^{-1}(EG(f + A^\T \lambda) + e)$.  %[OK!]
This gives
$x = -G(f + A^\T \lambda - E^\T M^{-1}(EG(f + A^\T \lambda) + e))
    % Constant term: 
        % -G(f - E^\T M^{-1}(EGf + e)) \\
        % = -G((I - E^\T M^{-1}EG) f  - E^\T M^{-1}e) \\
        % = -P f + F^{-1}E^\T M^{-1}e = x_0 [OK!]\\
    % Linear term: 
        % -F^{-1}(A^\T \lambda - E^\T M^{-1}(EF^{-1}(A^\T \lambda))) \\
        % -F^{-1}A^\T \lambda +F^{-1} E^\T M^{-1}(EF^{-1}(A^\T \lambda))) \\
        % -F^{-1}A^\T \lambda +F^{-1} E^\T M^{-1}EF^{-1}A^\T \lambda))) \\
        % (-F^{-1} +F^{-1} E^\T M^{-1}EF^{-1})A^\T \lambda)) \\
        % -(F^{-1} -F^{-1} E^\T M^{-1}EF^{-1})A^\T \lambda)) [OK!]
 = x_0 - PA^\T\lambda$. Then, $b-Ax = b-A(x_0 - PA^\T\lambda) = q_0 + APA^\T\lambda = q_0 + H\lambda$.
From the proof of Lemma~\ref{lem:QP_reformulation}, we recall 
from~\eqref{eq:slack-minimization} that the objective of~\eqref{eq:QP} is equal to the complimentarity
gap
\[
    \lambda^\T (b - A x) = % \lambda^\T (b - A x_0 +APA^\T\lambda) = 
     \lambda^\T(q_0 + H\lambda) = \lambda^\T H_s\lambda + q_0^\T\lambda.
\]
Let us now prove that the symmetric part of $H$ is positive semidefinite. Consider a generic vector $x\in\rr^n$ and let $y = F^{-1}x$. Then,
\[
     \begin{aligned}
     x^\T(F^{-1} + F^{-\T})x &= 
     x^\T F^{-\T}F^{\T}F^{-1}x+x^\T F^{-\T}FF^{-1}x \\
    &=\left( y^\T F^\T y + y^\T F y \right) = 2y^\T F_s y \geq 0
     \end{aligned}
\]
for all $x \in \rr^n$, i.e., $G=F^{-1}$ has a positive semidefinite symmetric part. 

When $q=0$, we get $P=G=F^{-1}$, $H=AF^{-1}A^\T$, and, hence, 
$H_s = (H+H^\T)/2 = A(F^{-1} + F^{-\T})A^\T/2$ is positive semidefinite.

Consider now the case $q>0$ and let $R=E^\T M^{-1}E$ and $Q= I-RG$. 
Then 
\begin{equation}
    \begin{aligned}
    Q^\T GQ = &(I-G^\T R^\T)G(I-RG) \\
         = & G-GRG-G^\T R^\T G + G^\T R^\T GRG \\
         = & P -G^\T R^\T G + G^\T R^\T GRG 
    \end{aligned}
\label{eq:QGQ}
\end{equation}
The last term in~\eqref{eq:QGQ} can be rewritten as
\[
    \begin{aligned}
    G^\T R^\T GRG &= G^\T E^\T M^{-\T} (E G  E^\T) M^{-1}E G \\
                  &= G^\T E^\T M^{-\T} M M^{-1}E G \\
                  &= G^\T E^\T M^{-1}E G = G^\T R^\T G
    \end{aligned}
\]
and, therefore, the last two terms in~\eqref{eq:QGQ} cancel out, and we get $Q^\T GQ = P$. 
Hence, $H_s=A(P+P^\T)A^\T/2=(AQ^\T) (F^{-1}+F^{-\T})(QA^\T)/2$ is also positive semidefinite.
\cvd

\end{document}